\newtheorem{thm}{Theorem}[section]
\newtheorem{theorem}[thm]{Theorem}
\newtheorem{lemma}[thm]{Lemma}
\newtheorem{definition}[thm]{Definition}
\newenvironment{pfofthmFirst}[1]
{\par\vskip2\parsep\noindent{\sc Proof of Theorem\ \ref{Thm:FirstMain}.}}{{\hfill
$\Box$}
\par\vskip2\parsep}
\newenvironment{pfofthmFourth}[1]
{\par\vskip2\parsep\noindent{\sc Proof of Theorem\ \ref{Thm:FourthMain}.}}{{\hfill
$\Box$}
\par\vskip2\parsep}
\theoremstyle{plain}
\newtheorem{proposition}[thm]{Proposition}
\newtheorem{fact}[thm]{Fact}
\newtheorem*{thm*}{Theorem}
\theoremstyle{remark}
\theoremstyle{definition}
\numberwithin{equation}{section}
\def\1{\mathbf{1}}
\def\E{\mathbb{E}}
\def\P{\mathbb{P}}
\def\R{\mathbb{R}}
\def\N{\mathbb{N}}
\def\Z{\mathbb{Z}}
\DeclareMathOperator{\Var}{Var}
\newcommand{\Reci}[1]{\frac{1}{#1}}
\newcommand{\limn}{\lim\limits_{n\to\infty}}
\def\pin{(\pi_n)}
\def\mbar{\overline{m}}
\begin{document}

\title{Coalescence and meeting times on $n$-block Markov chains
}


\author{Kathleen Lan       \and
        Kevin McGoff 
}




\begin{abstract} 

We consider finite state, discrete-time, mixing Markov chains $(V,P)$, where $V$ is the state space and $P$ is transition matrix. To each such chain $(V,P)$, we associate a sequence of chains $(V_n,P_n)$ by coding trajectories of $(V,P)$ according to their overlapping $n$-blocks. The chain $(V_n,P_n)$, called the $n$-block Markov chain associated to $(V,P)$, may be considered an alternate version of $(V,P)$ having memory of length $n$. Along such a sequence of chains, we characterize the asymptotic behavior of coalescence times and meeting times as $n$ tends to infinity. In particular, we define an algebraic quantity $L(V,P)$ depending only on $(V,P)$, and we show that if the coalescence time on $(V_n,P_n)$ is denoted by $C_n$, then the quantity $\frac{1}{n} \log C_n$ converges in probability to $L(V,P)$ with exponential rate. Furthermore, we fully characterize the relationship between $L(V,P)$ and the entropy of $(V,P)$.


 
\end{abstract}

\maketitle

\section{Introduction}

We consider finite state, discrete time Markov chains, which we denote by $(V,P)$, where $V$ is the state space and $P$ is the stochastic transition matrix. A coalescing random walk on the chain $(V,P)$ is defined as follows. At time $t=0$, place a random walker on each state in $V$. As time evolves, the walkers move independently according to $P$ until any two of them meet, or occupy the same state simultaneously. When two or more walkers meet, they coalesce (become one walker or cluster) and move together thereafter according to $P$. The first time that only one walker remains in the system is the coalescence time of the coalescing random walk associated to $(V,P)$. Note that this time is almost surely finite if and only if $(V,P)$ is mixing (aperiodic and irreducible). We therefore restrict attention to mixing chains.

Coalescing random walks are of interest in their own right and also due to their relationship to the voter model (see \cite{AldousFill} or \cite{Liggett} for an introduction to the voter model) and to certain graph-based algorithms in computer science (see \cite{CEOR}, for example). The voter model is an interacting particle system that can be interpreted as describing the evolution of opinions in a social network and has received considerable attention (see \cite{Durrett} and references therein). Because coalescing random walks are related to the voter model by duality, bounds on the coalescence time for a coalescing random walk may be translated into bounds on the consensus time in the corresponding voter model. 

In this work we study the asymptotic behavior of the coalescence time as the length of the memory in the underlying Markov chain tends to infinity. To be more precise, consider a fixed Markov chain $(V,P)$, and for each $n\geq 1$, define the $n$-block Markov chain $(V_n,P_n)$ as the chain obtained by coding trajectories from $(V,P)$ into blocks of length $n$ (see Definition \ref{Def:nBlockChain} for a precise definition). The chain $(V_n,P_n)$ provides an alternate presentation of $(V,P)$ that has memory of length $n$. 

Such sequences of Markov chains have long been studied in ergodic theory. Furthermore, they have been studied for their connections to data compression at least since the work of Wyner and Ziv \cite{WynerZiv1989}. For a sequence $(V_n,P_n)$ of $n$-block chains, it has been shown that the asymptotic behavior of recurrence times and hitting (waiting) times is governed by the entropy of $(V,P)$ \cite{MartonShields,OrnsteinWeiss1993,Shields1996}. See Section \ref{Sect:PreviousWork} for precise statements and a detailed discussion of connections between the present work and related literature.

Our main results describe the asymptotic behavior of coalescence times and meeting times for sequences of $n$-block chains in terms of an algebraic quantity $L(V,P)$ that only depends on the underlying chain $(V,P)$. Furthermore, we completely characterize the relationship between $L(V,P)$ and the entropy of $(V,P)$, which then characterizes when coalescence and meeting times occur exponentially faster than recurrence and hitting times along a sequence of $n$-block chains.

\subsection{Main results} \label{Sect:MainResults}

Consider a fixed, mixing Markov chain $(V,P)$, and let $(V_n,P_n)$ be the associated $n$-block Markov chain. Let $C_n$ denote the coalescence time of the coalescing random walk on $(V_n,P_n)$. Also, define the matrix $Q$ such that for $u,v$ in $V$, it holds that $Q(u,v)=P(u,v)^2$. Let $\lambda$ be the Perron eigenvalue of $Q$ (also known as the spectral radius of $Q$), and define $L(V,P) = -\log\lambda$. We may now state our first main result regarding the asymptotic behavior of $C_n$ as $n$ tends to infinity: the quantity $\frac{1}{n} \log C_n$ converges in probability to $L(V,P)$ with exponential rate.

\begin{theorem} \label{Thm:FirstMain}
 Suppose $(V,P)$ is a mixing Markov chain. Then for each $\epsilon >0$, there exists $\delta >0$ and $N$ in $\N$ such that if $n \geq N$, then
\begin{equation*}
 \P \biggl ( \biggl| \frac{1}{n} \log C_n - L(V,P) \biggr| > \epsilon \biggr) \leq e^{- \delta n}.
\end{equation*}
\end{theorem}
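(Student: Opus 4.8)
The plan is to reduce the statement to an analysis of \emph{meeting times} on $(V_n,P_n)$ together with a spectral estimate for the matrix $Q$. Write $\pi_n$ for the stationary distribution of $(V_n,P_n)$; since an $n$-block $B=w_0\cdots w_{n-1}$ is a legal path of $(V,P)$, one has $\pi_n(B)=\pi(w_0)\prod_{i=0}^{n-2}P(w_i,w_{i+1})$, where $\pi$ is stationary for $(V,P)$. The central observation is that two independent walkers on $(V_n,P_n)$ occupy the same block exactly when their underlying symbol trajectories agree on a window of length $n$, so that the probability of coincidence is governed by $Q(u,v)=P(u,v)^2$. Concretely, for walkers started from $\pi_n$ the per-step coincidence probability equals $\sum_B \pi_n(B)^2=\mathbf{v}^{\mathsf T}Q^{\,n-1}\mathbf{1}$ with $\mathbf{v}(w)=\pi(w)^2$, and since $(V,P)$ is mixing the matrix $Q$ is primitive, so Perron--Frobenius gives $\sum_B\pi_n(B)^2\asymp\lambda^{n}$. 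First I would record two uniform estimates, both consequences of mixing of $(V,P)$ (whose $n$-block chain has mixing time $O(n)$): an \textbf{upper estimate} $\P_{x,y}(\text{coincide at step }s)\le C\lambda^{n}$ valid once $s$ exceeds the $O(n)$ mixing time, and a \textbf{lower estimate} showing that from any pair of positions the walkers coincide within a window of length $2n$ with probability at least $c\lambda^{n}$ (a short burn-in using aperiodicity guarantees a common successor before invoking the $Q$-chain). These two estimates pin the exponential rate of the meeting time $M_n$ to $L(V,P)$.

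For the \textbf{upper tail} (coalescence is not too slow), partition time into disjoint windows of length $2n$ and apply the lower estimate independently across windows: the probability that two walkers fail to meet within $t=e^{n(L+\epsilon)}$ steps is at most $(1-c\lambda^{n})^{t/2n}\le\exp(-c\,e^{\epsilon n}/2n)$, which is super-exponentially small. To pass from meeting to coalescence I would invoke a comparison $C_n\le K\cdot M^{\max}_n$ in which $K$ is at most subexponential in $n$ (either the universal meeting-to-coalescence bound for general chains or a direct coupling through the graphical construction), so that $\frac1n\log C_n\le L+\epsilon$ off an exponentially small event, the $\frac1n\log K$ term being negligible.

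For the \textbf{lower tail} (coalescence is not too fast) the meeting case is a clean first-moment bound: by the upper estimate, $\E[\#\{s\le t:\text{coincide}\}]\le Ct\lambda^{n}$, so with $t=e^{n(L-\epsilon)}=\lambda^{-n}e^{-\epsilon n}$ one gets $\P(M_n\le t)\le Ce^{-\epsilon n}$. Upgrading this to $C_n$ is the main obstacle, because a naive union bound over the $|V_n|\asymp e^{nh}$ walkers is exponentially lossy. I would instead control the number of surviving clusters $N_t$ via the coalescent heuristic: when $N_t=k$ clusters sit in approximately product-stationary position, a merge occurs per step with probability $\Theta(\binom{k}{2}\lambda^{n})$, and since $\sum_{k\ge2}\binom{k}{2}^{-1}<\infty$ the total time to reach a single cluster is $\Theta(\lambda^{-n})$ with the dependence on $|V_n|$ entering only through the convergent tail. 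The delicate point---and the step I expect to be hardest---is justifying the approximate product-stationarity of cluster positions, i.e.\ ruling out that the last few clusters are atypically close and merge in $o(\lambda^{-n})$ steps; here I would use that the $O(n)$ mixing time of $(V_n,P_n)$ is subexponential to argue that between the slow late merges the cluster configuration relaxes toward $\pi_n$, so that the per-step merge probability of the final pair is $O(\lambda^{n})$ and the final merge alone already costs $\Omega(\lambda^{-n}e^{-\epsilon n})$ steps except on an event of probability $Ce^{-\epsilon n}$. Combining the upper and lower tails yields the stated exponential concentration of $\frac1n\log C_n$ at $L(V,P)$.
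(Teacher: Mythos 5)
Your analysis of the \emph{meeting} time is essentially sound and parallels the paper's: the identity $\sum_B\pi_n(B)^2=\mathbf{v}^{\mathsf T}Q^{n-1}\mathbf{1}$ with $\mathbf{v}(w)=\pi(w)^2$, plus primitivity of $Q$, pins the pair-coincidence probability at rate $\lambda^{n}$ (the paper reaches the same conclusion, $\lim_n\tfrac1n\log\Delta_n=-L$, via the thermodynamic formalism); your uniform lower estimate on meeting within an $O(n)$ window is Lemma \ref{Lemma:MeetingTimeBound}; and your windowed geometric bound for the upper tail, followed by a union bound over the $|V_n|\le|V|^{n}$ walkers, does yield a super-exponentially small probability that $C_n>e^{n(L+\epsilon)}$ (the paper instead runs a second-moment/Chebyshev argument, but either route works, since the factor $|V_n|$ is only exponential while the single-pair tail is super-exponential).

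The genuine gap is in your lower tail for $C_n$. You declare that upgrading $\P(M_n\le t)\le Ce^{-\epsilon n}$ from one pair to full coalescence is ``the main obstacle,'' and you propose to track the number of surviving clusters via the coalescent heuristic, conceding yourself that the key step---approximate product-stationarity of the late cluster configuration---is the hardest and is left unjustified. But no upgrade is needed, and the obstacle you describe is illusory: the inequality runs the right way for free. In the coalescing system the walkers move independently until they meet, so for every fixed pair $u,v$ the event $\{C_n\le t\}$ is contained in $\{m_n(u,v)\le t\}$ (full coalescence forces that particular pair to have met); hence $\P(C_n\le t)\le\P(m_n(u,v)\le t)$ for each pair, and averaging the initial pair over $\pi_n\times\pi_n$ gives $\P(C_n\le t)\le\mu\times\mu(M_n\le t)$ with no union bound over the exponentially many walkers. (A union bound over walkers is only required in the opposite direction, for the upper tail, where it is harmless.) This one-line monotonicity observation is exactly how Proposition \ref{Prop:TooEarly} proceeds. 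As written, your lower-tail argument rests on an unproven heuristic about cluster positions and is therefore incomplete; replacing it with the containment above closes the gap and makes the rest of your outline go through.
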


In our next main result, Theorem \ref{Thm:SecondMain}, we provide additional information about the behavior of $L(V,P)$. In particular, Theorem \ref{Thm:SecondMain} precisely describes the relationship between $L(V,P)$ and another fundamental parameter of $(V,P)$, its entropy. Recall that the entropy of $(V,P)$ may be expressed by the formula
\begin{equation} \label{Eqn:Entropy}
 h(V,P) = - \sum_{u,v \in V} \pi(u) P(u,v) \log P(u,v),
\end{equation}
where $\pi$ is the stationary distribution of the chain. Also recall that the chain $(V,P)$ is called a measure of maximal entropy if $h(V,P) \geq h(V,P')$ for all $|V| \times |V|$ stochastic transition matrices $P'$ such that $P(u,v) >0$ whenever $P'(u,v)>0$.

\begin{theorem} \label{Thm:SecondMain}
 Suppose $(V,P)$ is a mixing Markov chain. Then
\begin{enumerate}
 \item $0 \leq L(V,P) \leq h(V,P)$;
 \item $L(V,P) = 0$ if and only if $(V,P)$ is trivial (\textit{i.e.} $|V|=1$); and
 \item $L(V,P) = h(V,P)$ if and only if $(V,P)$ is a measure of maximal entropy. 
\end{enumerate}
\end{theorem}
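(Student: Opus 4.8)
The plan is to treat all three parts through the Perron--Frobenius theory of the nonnegative matrix $Q$. First note that since $Q(u,v)>0$ exactly when $P(u,v)>0$, the matrix $Q$ inherits irreducibility and aperiodicity from $P$, so $\lambda=\rho(Q)$ is a genuine Perron eigenvalue with strictly positive left and right eigenvectors. For the lower bound in part (1) I would use that the spectral radius of a nonnegative matrix is at most its largest row sum, together with $\sum_v Q(u,v)=\sum_v P(u,v)^2\le\sum_v P(u,v)=1$ (as $P(u,v)\in[0,1]$); hence $\lambda\le 1$ and $L(V,P)\ge 0$. For part (2) I would first record the sharpening that, for an irreducible nonnegative matrix, the spectral radius equals the maximal row sum if and only if all row sums coincide; this follows by applying the eigenvalue equation at a coordinate where the positive right eigenvector is maximal and propagating the resulting equalities along edges using irreducibility. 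Then $L=0$ means $\lambda=1$, and combined with $\lambda\le\max_u\sum_v Q(u,v)\le 1$ this forces the maximal row sum to equal $\lambda$, so by the rigidity statement every row sum of $Q$ equals $1$. Thus $\sum_v\bigl(P(u,v)-P(u,v)^2\bigr)=0$ gives $P(u,v)\in\{0,1\}$, so $P$ is the transition matrix of a deterministic map; an irreducible deterministic map is a single cycle, which is aperiodic only when it has length one, so $|V|=1$. The converse is immediate.

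The heart of the argument is a variational inequality for $\log\lambda$, which drives both the upper bound in (1) and all of (3). Fix the positive right eigenvector $r$ with $Qr=\lambda r$, and set $\tilde P(u,v)=Q(u,v)r_v/(\lambda r_u)$, which is stochastic. I claim that for every stochastic matrix $S$ with stationary distribution $\sigma$ and $\mathrm{supp}(S)\subseteq\mathrm{supp}(Q)$,
\[
\log\lambda \;\ge\; \sum_{u,v}\sigma(u)S(u,v)\log\frac{Q(u,v)}{S(u,v)},
\]
with equality if and only if $S(u,\cdot)=\tilde P(u,\cdot)$ for every $u$ with $\sigma(u)>0$. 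The proof is a telescoping computation: writing $\log\frac{Q(u,v)}{S(u,v)}=\log\frac{\tilde P(u,v)}{S(u,v)}+\log\lambda+\log r_u-\log r_v$ and summing against $\sigma(u)S(u,v)$, the $\log r$ terms cancel because $\sigma$ is $S$-stationary, the constant contributes $\log\lambda$, and the remaining term is $-\sum_u\sigma(u)\,\mathrm{KL}\bigl(S(u,\cdot)\,\|\,\tilde P(u,\cdot)\bigr)\le 0$ by Gibbs' inequality. Applying this with $S=P$ and $\sigma=\pi$ gives $\log\lambda\ge\sum_{u,v}\pi(u)P(u,v)\log P(u,v)=-h(V,P)$, i.e.\ $L(V,P)\le h(V,P)$, completing part (1).

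For part (3), the equality $L=h$ is precisely equality in the displayed inequality at $S=P$; since $\pi>0$, this holds if and only if $P=\tilde P$. Expanding $P(u,v)=\tilde P(u,v)=P(u,v)^2 r_v/(\lambda r_u)$ on the support shows $P(u,v)=\lambda r_u/r_v$ whenever $P(u,v)>0$. Writing $A$ for the $0/1$ adjacency matrix of $\mathrm{supp}(P)$ and $b_v=1/r_v$, stochasticity of $P$ becomes $\sum_v A(u,v)b_v=\lambda^{-1}b_u$, so $b$ is a strictly positive eigenvector of $A$; by Perron--Frobenius it must be the Perron eigenvector and $\lambda^{-1}=\rho(A)$, whence $P(u,v)=A(u,v)b_v/(\rho(A)b_u)$ is precisely the Parry measure of its support and $h(V,P)=\log\rho(A)$, so $(V,P)$ is a measure of maximal entropy. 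Conversely, if $(V,P)$ is a measure of maximal entropy then $P$ has this Parry form, and a direct check that $r_v=1/b_v$ satisfies $Qr=\rho(A)^{-1}r$ identifies $\lambda=\rho(A)^{-1}$ and yields $L=\log\rho(A)=h$.

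I expect the main obstacle to be the equality analysis underpinning part (3): establishing the variational inequality with a clean, checkable equality condition, and then recognizing that the identity $P=\tilde P$ secretly asserts that $1/r$ is a positive eigenvector of the adjacency matrix $A$, which by the rigidity of positive Perron eigenvectors pins $P$ down to the Parry measure. The remaining care is bookkeeping, namely confirming that entropy is maximized, uniquely and with value $\log\rho(A)$, by the Parry measure among all stochastic matrices supported within $\mathrm{supp}(P)$, which is the standard characterization of the measure of maximal entropy for the associated subshift of finite type.
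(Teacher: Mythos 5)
Your proof is correct, but it follows a genuinely different and more self-contained route than the paper. The paper proves the upper bound $L \leq h$ by applying Jensen's inequality to $\Delta_n = \sum_u \mu(u)^2$ and invoking the Shannon--McMillan--Breiman theorem together with the identity $L = \lim_n -\tfrac1n\log\Delta_n$ (Proposition \ref{Prop:DeltanLimit}); it proves part (2) by citing the exponential decay $\mu(u)\le e^{-\delta n}$ of word probabilities for non-trivial mixing chains; and it proves part (3) via the thermodynamic formalism, identifying $\mu$ as the equilibrium state of $f = 2\log P(x_1,x_2)$, invoking uniqueness of equilibrium states, and applying the Parry--Tuncel cohomology theorem to conclude $\mu=\mu_0$. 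You instead work entirely at the level of the finite matrix $Q$: your Gibbs-type variational inequality $\log\lambda \ge \sum_{u,v}\sigma(u)S(u,v)\log\bigl(Q(u,v)/S(u,v)\bigr)$, proved by stochasticizing $Q$ with its right Perron eigenvector and using nonnegativity of relative entropy, delivers $L\le h$ at once; its equality case, unwound into the statement that $1/r$ is a positive eigenvector of the adjacency matrix, identifies $P$ as the Parry measure and handles both directions of (3) explicitly; and your row-sum rigidity argument for part (2) (forcing $P(u,v)\in\{0,1\}$, hence a cycle, hence $|V|=1$ by aperiodicity) replaces the citation to the literature. What your approach buys is elementarity and transparency: everything reduces to Perron--Frobenius theory and convexity, with no appeal to $\Delta_n$, SMB, pressure, or Parry--Tuncel, and the measure of maximal entropy is exhibited concretely as the Parry matrix. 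What the paper's approach buys is economy within its own architecture --- Proposition \ref{Prop:DeltanLimit} and the pressure formalism are needed elsewhere anyway --- and a framing that suggests how the result would generalize beyond one-step Markov chains. The only points you lean on as "standard" (strict monotonicity of the spectral radius under removing edges, and uniqueness of the Parry measure as maximizer among stochastic matrices supported in $\mathrm{supp}(P)$, which is needed for the converse of (3)) are indeed standard and unproblematic.
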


In addition to the above results, we obtain several related results regarding meeting times that may be of independent interest. In order to state these additional results, let us define the meeting time of two random walkers. For a mixing Markov chain $(V,P)$ with corresponding $n$-block chain $(V_n,P_n)$ and two states $u,v$ in $V_n$, let $m_n(u,v)$ denote the first meeting time of two independent random walkers on $(V_n,P_n)$ started at $u$ and $v$, respectively. Define
\begin{align*}
 m_n^* & = \max_{u,v \in V_n} \E\bigl(m_n(u,v)\bigr), \, \text{ and } \\
 \mbar_n & = \sum_{u,v \in V_n} \pi_n(u) \, \pi_n(v) \, \E\bigl(m_n(u,v)\bigr),
\end{align*}
where $\pi_n$ is the stationary distribution of $(V_n,P_n)$. The quantity $m_n^*$ captures the maximal expected meeting time of two walkers on $(V_n,P_n)$, and $\mbar_n$ denotes the expected meeting time of two random walkers whose initial positions are chosen at random from the stationary distribution. It turns out that the asymptotic behavior of these quantities is also governed by $L(V,P)$.

\begin{theorem} \label{Thm:ThirdMain}
 Suppose $(V,P)$ is a mixing Markov chain. Then
\begin{equation*}
 \lim_n \frac{1}{n} \log m_n^* = \lim_n \frac{1}{n} \log \mbar_n = L(V,P).
\end{equation*}
\end{theorem}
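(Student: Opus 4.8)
The plan is to reduce the meeting problem on the $n$-block chain to a persistence problem for the product chain $P\otimes P$ on the fixed state space $V\times V$, and then to control that persistence via the Perron--Frobenius theory of $Q$. Two independent walkers on $(V_n,P_n)$ correspond to two independent trajectories $(x_t)$ and $(y_t)$ of $(V,P)$, where a walker's state at time $t$ is the length-$n$ window $x_{[t,t+n)}$. Hence the walkers meet at time $t$ precisely when $x_{t+i}=y_{t+i}$ for all $0\le i<n$; that is, when a collision $\{x_t=y_t\}$ of the product chain persists for $n$ consecutive steps. Writing $m_n(u,v)$ for the first such time, I first record the basic estimate: since $Q$ has the same directed graph as $P$, it is primitive, so by Perron--Frobenius $(Q^{n-1}\1)(w)=\lambda^{n-1}\bigl(c(w)+o(1)\bigr)$ uniformly in $w\in V$, with $c(w)>0$; because $L(V,P)=-\log\lambda$ this yields constants $0<c_1\le c_2$ with $c_1 e^{-Ln}\le (Q^{n-1}\1)(w)\le c_2 e^{-Ln}$ for all $w$ and all large $n$. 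Since $m_n^*\ge\mbar_n$ always, it then suffices to prove the upper bound $\limsup_n \frac1n\log m_n^*\le L(V,P)$ and the lower bound $\liminf_n\frac1n\log\mbar_n\ge L(V,P)$; the two sandwich both limits to $L(V,P)$.

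For the upper bound I would use a regeneration argument at collision times. Starting from any pair $(u,v)\in V_n\times V_n$, the product chain reaches the diagonal in expected time at most $h^*:=\max_{a,b}\E_{a,b}(\text{hitting time of the diagonal})$, a finite constant depending only on $(V,P)$ because $P\otimes P$ is irreducible and aperiodic on the fixed finite set $V\times V$. At any collision time $\sigma$, with the walkers at a common state $w$, the strong Markov property shows that the collision persists for the next $n-1$ steps with probability exactly $(Q^{n-1}\1)(w)\ge c_1 e^{-Ln}$, independent of the past. Treating successive collisions as trials---after a failed attempt one waits the $O(1)$ expected time until the next collision---the number of trials until the first persistent collision is stochastically dominated by a geometric variable of mean $c_1^{-1}e^{Ln}$, and each trial consumes expected time at most $h^*+n$. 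Hence $\E\bigl(m_n(u,v)\bigr)\le (h^*+n)\,c_1^{-1}e^{Ln}=e^{(L+o(1))n}$ uniformly in $(u,v)$, which gives $m_n^*\le e^{(L+o(1))n}$.

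For the lower bound I would use a first-moment (union) bound under the stationary initial law $\pi_n\times\pi_n$ that defines $\mbar_n$; this law makes $(x_t)$ and $(y_t)$ independent stationary chains. For each $t$,
\[
 \P\bigl(x_{[t,t+n)}=y_{[t,t+n)}\bigr)=\sum_{w\in V}\pi(w)^2\,(Q^{n-1}\1)(w)\le c_2 e^{-Ln},
\]
independent of $t$. A union bound over $0\le t\le T$ then gives $\P(m_n\le T)\le (T+1)\,c_2 e^{-Ln}$. Choosing $T=\lfloor e^{(L-\epsilon)n}\rfloor$ makes the right-hand side $O(e^{-\epsilon n})$, so $\mbar_n\ge T\,\P(m_n>T)\ge e^{(L-2\epsilon)n}$ for large $n$; letting $\epsilon\downarrow0$ yields $\liminf_n\frac1n\log\mbar_n\ge L(V,P)$. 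Combined with the previous paragraph and $m_n^*\ge\mbar_n$, both limits equal $L(V,P)$.

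I expect the main obstacle to be the upper bound, and specifically the regeneration step: one must verify that the per-attempt success probability is bounded below by $c_1 e^{-Ln}$ uniformly in the (arbitrary) collision state and independently of the history, and that the waiting time between attempts stays $O(1)$ uniformly as $n\to\infty$. Both rest on the fact that the collision dynamics live on the fixed space $V\times V$ rather than on the exploding $n$-block space, together with the uniform two-sided Perron--Frobenius control of $Q^{n-1}\1$; these are precisely the meeting-time estimates that underlie Theorem \ref{Thm:FirstMain}, so the present theorem should follow without essentially new ingredients.
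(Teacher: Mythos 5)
Your argument is correct and follows essentially the same route as the paper: your union-bound lower bound on $\mbar_n$ is the paper's estimate $\mu\times\mu(M_n\le k)\le k\Delta_n$ with $\Delta_n=\sum_{w\in V}\pi(w)^2\,(Q^{n-1}\mathbf{1})(w)\asymp e^{-L(V,P)\,n}$ (Propositions \ref{Prop:DeltanLimit} and \ref{Prop:nBlockBounds}), and your regeneration/geometric-trials upper bound on $m_n^*$ is the argument of Lemma \ref{Lemma:MeetingTimeBound} and Proposition \ref{Prop:nBlockBounds} with the fixed mixing time $T$ of the product chain on $V\times V$ replaced by the hitting time of its diagonal. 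The only cosmetic difference is that you extract the rate $e^{-L(V,P)\,n}$ directly from Perron--Frobenius applied to $Q$, whereas the paper packages the same fact as $\mathcal{P}(2\log P)=\log\lambda$ via the thermodynamic formalism (Fact \ref{Fact:PandL}).
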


Our final main result, Theorem \ref{Thm:FourthMain}, gives an almost-sure version of the statement that the asymptotic behavior of meeting times is governed by $L(V,P)$. To make the almost-sure statement precise, let $\mu$ be the measure on $V^{\N}$ induced by the chain $(V,P)$ (see Section \ref{Sect:Preliminaries} for a precise definition). Then for $x,y$ in $V^{\N}$, let
\begin{equation*}
 M_n(x,y) = \inf \{ t \geq 1 : x_t^{t+n-1} = y_t^{t+n-1} \}.
\end{equation*}
Note that choosing $(x,y)$ according to $\mu \times \mu$ makes $M_n(x,y)$ into the meeting time of two random walkers on $(V_n,P_n)$ with initial positions chosen from stationarity.

\begin{theorem} \label{Thm:FourthMain}
 Suppose $(V,P)$ is a mixing Markov chain. Then
\begin{equation*}
 \lim_n \frac{1}{n} \log M_n = L(V,P), \quad \mu \times \mu - \text{a.s.}
\end{equation*}
\end{theorem}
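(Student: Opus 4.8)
The plan is to establish the two one-sided bounds $\liminf_n \frac{1}{n}\log M_n \ge L(V,P)$ and $\limsup_n \frac{1}{n}\log M_n \le L(V,P)$, each holding $\mu\times\mu$-almost surely, via the Borel--Cantelli lemma; intersecting the resulting full-measure events over $\epsilon = 1/k$, $k\in\N$, then yields $\lim_n \frac{1}{n}\log M_n = L(V,P)$ a.s. The starting observation is that under $\mu\times\mu$ the pair of window processes $t\mapsto x_t^{t+n-1}$ and $t\mapsto y_t^{t+n-1}$ is exactly a pair of independent copies of the $n$-block chain $(V_n,P_n)$, each started from its stationary distribution $\pi_n$, and $M_n$ is their first meeting time. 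In particular $\E_{\mu\times\mu}(M_n) = \mbar_n$ (up to the convention at coincident starting states, which does not affect the $\frac{1}{n}\log$ asymptotics).

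For the lower bound I would argue by a first-moment estimate. By stationarity, for each fixed $t$ the marginals of $x_t^{t+n-1}$ and $y_t^{t+n-1}$ are both $\pi_n$ and they are independent, so the single-time collision probability is
\begin{equation*}
 p_n \;=\; \P_{\mu\times\mu}\bigl(x_t^{t+n-1}=y_t^{t+n-1}\bigr)\;=\;\sum_{w} \pi_n(w)^2 \;=\;\sum_{u\in V}\pi(u)^2\,\bigl(Q^{\,n-1}\1\bigr)(u),
\end{equation*}
where the last equality expands $\pi_n(w)=\pi(w_1)\prod_i P(w_i,w_{i+1})$ and uses $P(\cdot,\cdot)^2=Q(\cdot,\cdot)$. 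Since $(V,P)$ is mixing, $P$ is primitive and $Q$ shares its support, so $Q$ is primitive; Perron--Frobenius then gives $\lambda^{-(n-1)}Q^{n-1}\to r\,\ell^{\mathsf T}$ for suitably normalized positive Perron eigenvectors, and hence $p_n=\Theta(\lambda^{n-1})=\Theta(e^{-nL(V,P)})$. A union bound over $1\le t<T$ yields $\P_{\mu\times\mu}(M_n<T)\le T p_n$; taking $T=e^{n(L(V,P)-\epsilon)}$ makes this at most $C e^{-n\epsilon}$, which is summable in $n$. Borel--Cantelli then gives $M_n\ge e^{n(L(V,P)-\epsilon)}$ for all large $n$, a.s., i.e.\ $\liminf_n \frac{1}{n}\log M_n\ge L(V,P)-\epsilon$.

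For the upper bound I would use Markov's inequality together with Theorem \ref{Thm:ThirdMain}. Since $\E_{\mu\times\mu}(M_n)=\mbar_n$ and Theorem \ref{Thm:ThirdMain} gives $\frac{1}{n}\log\mbar_n\to L(V,P)$, we have $\mbar_n\le e^{n(L(V,P)+\epsilon/2)}$ for all large $n$, whence
\begin{equation*}
 \P_{\mu\times\mu}\bigl(M_n>e^{n(L(V,P)+\epsilon)}\bigr)\;\le\;\mbar_n\,e^{-n(L(V,P)+\epsilon)}\;\le\; e^{-n\epsilon/2}.
\end{equation*}
This is again summable, so Borel--Cantelli yields $\limsup_n\frac{1}{n}\log M_n\le L(V,P)+\epsilon$ a.s. Combining the two bounds and letting $\epsilon\downarrow 0$ through $1/k$ completes the proof.

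The substantive analytic content — that the typical meeting time grows like $e^{nL(V,P)}$ rather than faster — is exactly the upper estimate on $\mbar_n$ supplied by Theorem \ref{Thm:ThirdMain}, so once that theorem is in hand the two Borel--Cantelli steps are short. The main thing to get right in the remaining work is the first-moment lower bound: one must identify the single-time collision probability with $\sum_w \pi_n(w)^2$ and pin down its exact exponential rate $-L(V,P)$ through the Perron--Frobenius asymptotics of $Q^{n-1}$, taking care that the subexponential Perron--Frobenius corrections are absorbed harmlessly into the constant $C$ so that the union bound stays summable. A secondary point requiring care is the clean identification of the law of $M_n$ under $\mu\times\mu$ with the stationary meeting time of $(V_n,P_n)$, and in particular the finiteness of $\E_{\mu\times\mu}(M_n)=\mbar_n$, which holds because meeting times on mixing chains have exponential tails.
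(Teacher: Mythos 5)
Your proof is correct. The lower ($\liminf$) bound is essentially the paper's own argument: a union bound over the fewer than $e^{n(L-\epsilon)}$ starting times, shift-invariance of $\mu \times \mu$ to reduce each term to the single-time collision probability $\Delta_n = \sum_w \pi_n(w)^2$, the exponential rate $\frac{1}{n}\log\Delta_n \to -L(V,P)$, and Borel--Cantelli (this is Proposition \ref{Prop:TooEarly}). Your derivation of that rate by writing $\Delta_n = \sum_{u}\pi(u)^2\,(Q^{n-1}\1)(u)$ and invoking Perron--Frobenius for the primitive matrix $Q$ is a more elementary substitute for the paper's Proposition \ref{Prop:DeltanLimit}, which goes through the pressure of $2\log P$ and Fact \ref{Fact:PandL}; both give $\Delta_n = \Theta(\lambda^{n-1})$. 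Where you genuinely diverge is the upper ($\limsup$) bound: you apply Markov's inequality directly to $M_n$, using $\E_{\mu\times\mu}(M_n) = \mbar_n \leq e^{n(L+\epsilon/2)}$ from Theorem \ref{Thm:ThirdMain}, whereas the paper (Proposition \ref{Prop:TooLate}) first dominates $M_n$ by the coalescence time $C_n$ and then runs a Chebyshev second-moment argument via Lemma \ref{Lemma:SecondMoment}. Your route is shorter and entirely adequate for Theorem \ref{Thm:FourthMain}; the paper's heavier machinery is there because Proposition \ref{Prop:TooLate} simultaneously delivers the tail bound on $C_n$ needed for Theorem \ref{Thm:FirstMain}. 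The side points you flag --- identifying $M_n$ under $\mu\times\mu$ with the stationary meeting time on $(V_n,P_n)$ and the finiteness of $\mbar_n$ --- are exactly the identifications the paper itself uses (it takes $\mbar_n = \E_{\mu\times\mu}(M_n)$ in the proof of Proposition \ref{Prop:nBlockBounds}), so there is no gap there.
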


For non-trivial chains, by Theorems \ref{Thm:FirstMain}, \ref{Thm:SecondMain}, and \ref{Thm:ThirdMain}, we see that $\E C_n$, $m_n^*$, and $\mbar_n$ are each exponential in $n$. Moreover, the relevant exponents for these quantities are all equal. The fact that these exponents are all equal is somewhat remarkable, given that full coalescence involves the meeting of exponentially many walkers, whereas a meeting time refers only to the meeting of two walkers. Thus, in terms of the relevant exponents, full coalescence takes about as much time as the meeting of the last two remaining walkers. We note that this general phenomenon has been observed previously in the context of continuous-time chains \cite{Durrett,Oliveira2013}. 

\subsection{Relation to previous work} \label{Sect:PreviousWork}

The expected coalescence time has been studied for certain types of chains, including random walks on the torus in ${\mathbb{Z}}^d$ \cite{Cox}, on some random graphs \cite{CFR2009}, and on certain general classes of graphs  \cite{CEOR,Oliveira2013}. In all of these instances, one considers a sequence of chains in which the cardinality of the state space tends to infinity, and one investigates the asymptotic behavior of the coalescence times along the sequence. Since the expected coalescence time is generally quite difficult to calculate exactly, methods for estimating its order of magnitude are often studied instead. Typically, other parameters of the Markov chain, such as hitting times, meeting times, or spectral gaps, are used to give bounds on the asymptotic behavior of the expected coalescence time. 

In the discrete-time setting, it was recently shown in \cite{CEOR} that for a large class of undirected, connected graphs $G$, the expected coalescence time for the lazy random walk on $G$ can be bounded above by a constant multiple of $\frac{n}{v(1-\lambda_2)}$, where $n$ is the number of vertices, $v$ is a measure of the variability of the degree distribution, and $1-\lambda_2$ is the spectral gap of the chain.

Additional work in both the discrete- and continuous-time settings has been devoted to studying the following question of Aldous and Fill \cite{AldousFill}: does there exists a universal constant $K$ such that $\E(C) \leq K H$, where $H$ is the maximal expected hitting time for the chain. In discrete-time, the question has been answered in the affirmative for random walks on $r$-regular graphs by \cite{CFR2009}. In continuous time, the question was originally answered affirmatively for tori in $\Z^d$ by \cite{Cox}, and more recently it was answered affirmatively for reversible chains by \cite{Oliveira2012}.

Furthermore, recent work of Oliveira \cite{Oliveira2013} gives estimates on the expected coalescence times for large classes of continuous time chains. The bounds in that work are stated in terms of the mixing time and the meeting time of the chain. These results show that for chains with fast mixing, the expected coalescence time is bounded above by a constant multiple of the expected meeting time of two walkers.

In contrast to the general results cited above, our results deal with specific sequences of chains. In exchange for this specificity, we are able to find exact representations for the exponential order of magnitude of the corresponding meeting times and coalescence times. Furthermore, we go beyond the expected coalescence and meeting times; indeed, we obtain results in probability and almost surely.

Let us discuss the basic structure of sequences of $n$-block Markov chains in order to place them in the broader context of other well-studied chains. Consider the non-trivial case $|V|>1$. First, note that the number of states in $V_n$ grows exponentially in $n$. Second, in the associated directed graph (with vertex set $V_n$ and an edge from $u$ to $v$ whenever $P_n(u,v)>0$), both the in-degree and the out-degree of any vertex is uniformly bounded in $n$. Lastly, it's not difficult to see that the mixing time of $(V_n,P_n)$ is $n+C$ for some constant $C$ depending only on $(V,P)$. Thus, if one would like to think of these chains as certain random walks on the associated graphs, then these graphs are sparse (bounded degrees), but the chains mix relatively quickly (mixing time is logarithmic in the number of vertices).

Lastly, let us mention a line of work in ergodic theory and information theory that also considers asymptotic properties of Markov chains in the long memory limit. With the same notation as above, for $x$ in $V^{\N}$, define
\begin{equation*}
R_n(x) = \inf \{ t > 1 :  x_t^{t+n-1} = x_1^n\}.
\end{equation*}
If $x$ is chosen according to the measure $\mu$ defined by a Markov chain $(V,P)$, then $R_n$ may be viewed as the first return time of a random walk on $(V_n,P_n)$ to its initial state, started from stationarity. This quantity was originally studied by Wyner and Ziv \cite{WynerZiv1989} in the context of data compression. They showed that $n^{-1} \log R_n$ converges in probability to the entropy of the process, $h(V,P)$. Later Ornstein and Weiss \cite{OrnsteinWeiss1993} proved that for any ergodic process on a finite alphabet,  $n^{-1} \log R_n$ converges almost surely to the entropy of the process. 

Similarly, Wyner and Ziv considered the $n$-block waiting time (or hitting time): for $x,y$ in $V^{\N}$, let
\begin{equation*}
W_n(x,y) = \inf \{ t \geq 1 : x_1^n = y_t^{t+n-1} \}.
\end{equation*}
They show in \cite{WynerZiv1989} that $n^{-1} \log W_n$ converges in probability to the entropy $h(V,P)$. Several other authors generalized this result to a wider class of processes \cite{NobelWyner,Shields1993} and obtained the almost-sure version of the statement \cite{MartonShields,Shields1996}.

Notice that the relevant exponents for recurrence times and waiting times (hitting times) are given by $h(V,P)$, whereas the exponent for meeting times and coalescence times is given by $L(V,P) \leq h(V,P)$. Furthermore, we can say exactly when these quantities are equal: whenever $(V,P)$ is a measure of maximal entropy. As a consequence, we obtain that whenever $(V,P)$ is not a measure of maximal entropy, meeting times and coalescence times occur exponentially faster than waiting times or return times along the sequence of $n$-block chains.




In light of the progress made in studying return times and hitting times for more general classes of processes than Markov chains, it might be interesting to study meeting times, and possibly coalescence times, in the context of more general processes than Markov chains, but we leave this direction for future work. 

\section{Preliminaries} \label{Sect:Preliminaries}

We consider a finite set $V$ and a stochastic transition matrix $P$ indexed by $V$. That is, for each pair $(u,v)$ in $V$, we have $P(u,v) \geq 0$, and furthermore for each $u$ in $V$, it holds that
\begin{equation*}
 \sum_{v \in V} P(u,v) =1.
\end{equation*}
We refer to any such pair $(V,P)$ as a Markov chain with state space $V$ and transition matrix $P$. We say that the chain is \textit{non-trivial} if $|V|>1$, and the chain is \textit{mixing} if there exists $n \geq 1$ such that $P^n >0$. 

Suppose $(V,P)$ is a mixing Markov chain. By the Perron-Frobenius Theorem, there exists a unique stochastic left eigenvector of $P$ with eigenvalue $1$, and we denote this eigenvector by $\pi$. We refer to $\pi$ as the stationary distribution of the chain. For words $u = u_1 \dots u_n$ in $V^n$, we denote by $u_i^j$ the subword $u_i \dots u_j$. Also, we do not distinguish between the word $u$ and the set 
\begin{equation*}
 \{ x \in V^{\N} : x_1^n = u \}.
\end{equation*}
 Let $\mu$ denote the probability measure on $V^{\N}$ characterized by the following condition: for each $u$ in $V^n$,
\begin{equation} \label{Def:mu}
 \mu(u) = \pi(u_1) \prod_{j=1}^{n-1} P(u_j, u_{j+1}).
\end{equation}

\begin{definition} \label{Def:nBlockChain}
Suppose $(V,P)$ is a mixing Markov chain. For each $n \geq 1$, we define the $n$-block chain associated to $(V,P)$ to be the chain $(V_n,P_n)$ such that
\begin{equation*}
V_n = \{ u \in V^n : \mu(u) >0 \},
\end{equation*}
and for $u,v$ in $V_n$,
\begin{equation*}
 P_n(u,v) = \left\{ \begin{array}{ll}
                     P(u_n,v_n), & \text{ if } u_2^n = v_1^{n-1} \\
                         0, & \text{ otherwise}.
                    \end{array}
            \right.
\end{equation*}
\end{definition}
Note that the $n$-block chain $(V_n,P_n)$ is mixing since we have assumed that $(V,P)$ is mixing. Let $\pi_n$ denote the stationary distribution of $(V_n,P_n)$. One may check easily that $\pi_n(u) = \mu(u)$ for any word $u$ in $V_n$. For notation, we define
\begin{equation} \label{Def:Deltan}
 \Delta_n = \sum_{u \in V_n} \pi_n(u)^2 = \sum_{u \in V_n} \mu(u)^2 = \mu \times \mu \Bigl( x_1^n = y_1^n \Bigr).
\end{equation}

For each $n$, let $\P$ denote the probability measure corresponding to the coalescing random walk on $(V_n,P_n)$ (omitting the dependence of $\P$ on $(V,P)$ and $n$). We denote by $\E$ and $\Var$ the expectation and variance operators with respect to $\P$, respectively. When taking expectation with respect to another measure $\nu$, possibly on another probability space, we use the notation $\E_{\nu}$.

Let $C_n$ denote the full coalescence time of the coalescing random walk on $(V_n,P_n)$. For $u,v$ in $V_n$, let $m_n(u,v)$ be the random variable giving the first meeting time of two random walkers started at $u$ and $v$. Define
\begin{align*}
 m_n^* & = \max_{u,v \in V_n} \E\bigl( m_n(u,v)  \bigr) \\
 \mbar_n & = \sum_{u,v \in V_n} \pi_n(u) \,  \pi_n(v) \, \E\bigl( m_n(u,v) \bigr).
\end{align*}

\subsection{Thermodynamic formalism for Markov chains} \label{Sect:ThermFormalism}

The proofs of Theorem \ref{Thm:EqEntropy} and Proposition \ref{Prop:DeltanLimit} both appeal to the thermodynamic formalism for dynamical systems \cite{Bowen,Ruelle,Walters}. The survey \cite{BoylePetersen}, which directly addresses Markov chains, contains all the information from the thermodynamic formalism needed in this work. Here we state the relevant facts. 
The topological support of $\mu$ is
\begin{equation*}
 X = \{ x \in V^{\N} : \forall n, \, \mu(x_1^n)>0\}.
\end{equation*}
Let $\sigma : X \to X$ denote the left-shift map defined by $\sigma(x)_n = x_{n+1}$. The measure $\mu$, defined by (\ref{Def:mu}), is also characterized by a certain variational property, which we now discuss. For any continuous function $f : X \to \R$, a subadditivity argument implies that the following limit exists:
\begin{equation*}
 \mathcal{P}(f) = \lim_n \frac{1}{n} \log \sum_{u \in V_n} \exp\Biggl( \sum_{j = 0}^{n-1} f \circ \sigma^j (x(u)) \Biggr),
\end{equation*}
where $x(u)$ is any point in $X$ satisfying $x(u)_1^n = u$. The quantity $\mathcal{P}(f)$ is called the pressure of $f$. For any $\sigma$-invariant measure $\nu$ on $X$, let $h(\nu)$ denote that measure-theoretic entropy of $\nu$. The variational principle states that
\begin{equation*}
 \mathcal{P}(f) = \sup_{\nu} \biggl\{ h(\nu) + \int f d\nu \biggr\},
\end{equation*}
where the supremum runs over all $\sigma$-invariant Borel probability measures $\nu$ on $X$. For any locally constant function $f: X \to \R$, there exists a unique $\sigma$-invariant Borel probability measure $\mu_f$ on $X$ such that
\begin{equation*}
 \mathcal{P}(f) = h(\mu_f) + \int f d\mu_f.
\end{equation*}
The measure $\mu_f$ is called the equilibrium state for $f$. The measure $\mu$ corresponding to the Markov chain $(V,P)$ (defined by (\ref{Def:mu})) is characterized as the equilibrium state for the function $g : X \to \R$ given by $g(x) = \log P(x_1, x_2)$. Further, the equilibrium state for the constant zero function, denoted $\mu_0$, is called the \textit{measure of maximal entropy} on $X$. 

We also need the following result of Parry and Tuncel:
\begin{theorem}[\cite{ParryTuncel}] \label{Thm:ParryTuncel}
Suppose $X$ is the topological support of an irreducible Markov chain and $f,g: X \to \R$ are locally constant. Then the following are equivalent:
\begin{enumerate}
 \item $\mu_f = \mu_g$;
 \item there exists a continuous function $k : X \to \R$ and a constant $c$ such that $f = g + k - k \circ \sigma +c$.
\end{enumerate}
\end{theorem}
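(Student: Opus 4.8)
The plan is to treat the two implications separately: (2)$\Rightarrow$(1) by a direct integration against invariant measures, and (1)$\Rightarrow$(2) by combining the Gibbs property of equilibrium states for locally constant potentials with a Gottschalk--Hedlund/Livšic-type coboundary construction. The easy direction first. Suppose $f = g + k - k\circ\sigma + c$ with $k$ continuous and $c$ constant. For any $\sigma$-invariant Borel probability measure $\nu$ on $X$, invariance gives $\int (k - k\circ\sigma)\, d\nu = 0$, so $\int f\, d\nu = \int g\, d\nu + c$. Hence $h(\nu) + \int f\, d\nu = \bigl(h(\nu) + \int g\, d\nu\bigr) + c$ for every invariant $\nu$, so the two functionals occurring in the variational principle differ by the constant $c$ and are maximized at exactly the same measures. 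Since $f$ and $g$ are locally constant, each functional has a unique maximizer, and these must coincide; that is, $\mu_f = \mu_g$ (and incidentally $\mathcal{P}(f) = \mathcal{P}(g) + c$).

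For (1)$\Rightarrow$(2), write $\mu = \mu_f = \mu_g$ and abbreviate $S_n\psi = \sum_{j=0}^{n-1}\psi\circ\sigma^j$. The first step is to show that the Birkhoff sums of $\phi := f - g - c$, where $c := \mathcal{P}(f) - \mathcal{P}(g)$, are uniformly bounded. Here I would invoke the Gibbs property of equilibrium states of locally constant potentials, supplied by the Ruelle--Perron--Frobenius theorem over the irreducible shift $X$: there is a constant $C \geq 1$ so that for every $x \in X$ and every $n$,
\[
C^{-1} \leq \frac{\mu(x_1^n)}{\exp\bigl(S_n f(x) - n\mathcal{P}(f)\bigr)} \leq C,
\]
and likewise with $g$ in place of $f$. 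Dividing the estimate for $g$ by that for $f$ and taking logarithms, the factor $\mu(x_1^n)$ cancels and one obtains $|S_n\phi(x)| = \bigl| S_n(f-g)(x) - n\bigl(\mathcal{P}(f)-\mathcal{P}(g)\bigr)\bigr| \leq 2\log C$ for all $x$ and all $n$.

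It then remains to pass from uniformly bounded Birkhoff sums to a continuous transfer function $k$. After recoding to a higher-block presentation of $X$ (permissible because $\phi$ is locally constant, hence depends on finitely many coordinates), I may assume $\phi$ is a weight on the edges of the defining graph. Any cycle $v_0 \to v_1 \to \cdots \to v_\ell = v_0$ determines a periodic point, and traversing it $N$ times yields Birkhoff sum $N\sum_i \phi(v_{i-1},v_i)$, which can stay bounded only if the cycle weight $\sum_i \phi(v_{i-1},v_i)$ vanishes. Fixing a base vertex and using irreducibility (strong connectivity) to reach every vertex, this zero-cycle condition makes the path sum $k(u) := -\sum \phi(\text{edges along any path from the base to } u)$ well defined, and by construction $\phi(u,v) = k(u) - k(v)$ on each edge, i.e. $\phi = k - k\circ\sigma$. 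Pulling $k$ back to $X$ as a locally constant (hence continuous) function gives $f = g + k - k\circ\sigma + c$, as required. I expect the main obstacle to be the first step, namely extracting uniformly bounded Birkhoff sums, since that is where the Gibbs property and the full thermodynamic formalism are essential; the recoding and the zero-cycle bookkeeping that produce $k$ are then routine combinatorics, and are precisely the combinatorial form of the principle that a continuous function with bounded Birkhoff sums over a transitive system is a coboundary.
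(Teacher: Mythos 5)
Your proposal addresses a statement that the paper does not prove at all: Theorem \ref{Thm:ParryTuncel} is imported verbatim from Parry and Tuncel \cite{ParryTuncel} and used as a black box in the proof of Theorem \ref{Thm:EqEntropy}, so there is no internal proof to compare against. Judged on its own merits, your argument is the standard one and is essentially correct. The direction (2)$\Rightarrow$(1) is complete: integrating the coboundary against any invariant measure shows the two variational functionals differ by the constant $c$, and uniqueness of equilibrium states for locally constant potentials (which the paper itself records in Section \ref{Sect:ThermFormalism}) forces $\mu_f = \mu_g$. For (1)$\Rightarrow$(2), the Gibbs-property cancellation giving $|S_n\phi| \leq 2\log C$ and the Liv\v{s}ic-type recoding/path-sum construction of $k$ are the right ingredients. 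Two points deserve tightening. First, the theorem is stated for \emph{irreducible} chains, while the Ruelle--Perron--Frobenius/Gibbs estimate you invoke is usually quoted for mixing (aperiodic) subshifts; in the periodic case you should either derive the two-sided Gibbs bound directly from the Perron--Frobenius theorem for irreducible nonnegative matrices (after recoding, the equilibrium state of a two-coordinate potential is the Markov measure with transitions $A(u,v)r(v)/(\lambda r(u))$, and positivity of the eigenvector data on a finite state space gives the uniform constant), or pass through the spectral/period decomposition. Second, well-definedness of your path sums in a \emph{directed} graph is not immediate from the vanishing of cycle weights, since you cannot reverse a directed path to close up a cycle: given two paths $p_1, p_2$ from the base vertex to $u$, use strong connectivity to pick a return path $q$ from $u$ to the base, note that $w(p_1) + w(q) = 0 = w(p_2) + w(q)$ because both concatenations are cycles, and conclude $w(p_1) = w(p_2)$. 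With these two details supplied, your proof is a sound, self-contained replacement for the citation.
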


Lastly, let us mention the following fact, which will be used in the proof of Theorem \ref{Thm:SecondMain}. For reference, see \cite{BoylePetersen}.

\begin{fact} \label{Fact:PandL} Suppose $(V,P)$ is a mixing Markov chain. Let $Q$ be the $|V| \times |V|$ matrix defined, for $u,v$ in $V$, by $Q(u,v) = P(u,v)^2$. Let $\lambda$ be the Perron eigenvalue of $Q$, and let $f : X \to \R$ be the function $f(x) = \log Q(x_1,x_2)$. Then $\mathcal{P}(f) = \log \lambda$.
\end{fact}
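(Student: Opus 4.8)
The plan is to evaluate $\mathcal{P}(f)$ directly from its definition, reduce the defining sum to a power of the matrix $Q$, and then invoke Perron--Frobenius theory. First I would observe that $f(x) = \log Q(x_1,x_2)$ is locally constant, depending only on the first two coordinates, so that $f \circ \sigma^j(x) = \log Q(x_{j+1}, x_{j+2})$ for each $j$. Consequently, for a word $u \in V_n$ and any extension $x(u) \in X$ with $x(u)_1^n = u$,
\begin{equation*}
 \exp\Biggl( \sum_{j=0}^{n-1} f \circ \sigma^j(x(u)) \Biggr) = \left[ \prod_{i=1}^{n-1} Q(u_i, u_{i+1}) \right] Q\bigl(u_n, x(u)_{n+1}\bigr).
\end{equation*}
The bracketed factor is intrinsic to $u$, while the final factor is a boundary term depending on the chosen extension.

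Next I would control that boundary term. Since $(V,P)$ is irreducible, every $u \in V_n$ extends to a point of $X$, and $Q(u_n, x(u)_{n+1})$ ranges over the finitely many positive entries of $Q$; hence it is bounded above and below by positive constants $c_1, c_2$ independent of $n$. Thus the defining sum is trapped between $c_1$ and $c_2$ times $\sum_{u \in V_n} \prod_{i=1}^{n-1} Q(u_i, u_{i+1})$. Recalling that $V_n$ is exactly the set of admissible words (those with $\prod_i P(u_i,u_{i+1}) > 0$, equivalently $\prod_i Q(u_i,u_{i+1}) > 0$), this last sum equals $\mathbf{1}^T Q^{n-1} \mathbf{1}$, where $\mathbf{1}$ is the all-ones vector. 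Taking $n^{-1}\log$ of the resulting inequalities annihilates $c_1$ and $c_2$, so $\mathcal{P}(f) = \lim_n n^{-1} \log\bigl(\mathbf{1}^T Q^{n-1}\mathbf{1}\bigr)$.

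Finally I would apply Perron--Frobenius. Because $Q$ shares its support with $P$ and $(V,P)$ is mixing (so $P^m > 0$ for some $m$, whence $Q^m > 0$), the matrix $Q$ is primitive with spectral radius $\lambda$. The standard asymptotics then give $Q^{n-1} = \lambda^{n-1}\bigl(r\,\ell^T + o(1)\bigr)$ with strictly positive left and right Perron eigenvectors $\ell, r$, so that $\mathbf{1}^T Q^{n-1}\mathbf{1} \sim c\,\lambda^{n-1}$ for a constant $c > 0$. Hence $n^{-1}\log\bigl(\mathbf{1}^T Q^{n-1}\mathbf{1}\bigr) \to \log\lambda$, which yields $\mathcal{P}(f) = \log\lambda$.

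The only genuine subtlety I anticipate is the bookkeeping around the boundary term, namely confirming that the single extension-dependent factor is harmless, together with verifying primitivity of $Q$ so that Perron--Frobenius delivers a strictly positive limiting constant; both are routine once the computation is organized as above.
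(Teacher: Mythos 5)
The paper does not actually prove this statement: it is labeled a ``Fact'' and justified only by a citation to the Boyle--Petersen survey, so there is no internal argument to compare against. Your blind proof is a correct, self-contained derivation of the standard result, and each step checks out. The reduction of $\sum_{j=0}^{n-1} f\circ\sigma^j(x(u))$ to $\prod_{i=1}^{n-1}Q(u_i,u_{i+1})$ times the single extension-dependent factor $Q(u_n,x(u)_{n+1})$ is right (since $f$ depends on only two coordinates there is exactly one such boundary factor), and that factor is indeed squeezed between the smallest and largest positive entries of $Q$ because $x(u)\in X$ forces $P(x(u)_n,x(u)_{n+1})>0$. The identification $\sum_{u\in V_n}\prod_i Q(u_i,u_{i+1})=\mathbf{1}^TQ^{n-1}\mathbf{1}$ is valid because irreducibility gives $\pi>0$, so $V_n$ coincides with the set of words along which all transitions of $P$ (equivalently of $Q$) are positive, and the inadmissible words contribute zero to the matrix-power sum. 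Finally, $Q$ inherits the zero pattern of $P$, so $P^m>0$ implies $Q^m>0$ and $Q$ is primitive; the Perron--Frobenius asymptotics $\mathbf{1}^TQ^{n-1}\mathbf{1}\sim c\,\lambda^{n-1}$ with $c>0$ then give the limit $\log\lambda$ (and, as a bonus, reprove existence of the limit defining $\mathcal{P}(f)$ for this particular $f$ without invoking subadditivity). In short, you have supplied a complete elementary proof of a fact the paper imports from the literature.
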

%

\section{Proofs}

\subsection{Bounds for general chains}

In this section, we consider a mixing Markov chain $(V,P)$, and we denote by $C$ the coalescence time on $(V,P)$. For $u,v$ in $V$, we let $m(u,v)$ denote the meeting time of two random walkers on $(V,P)$ started at $u$ and $v$, respectively. Finally, we let $m^* = \max_{u,v \in V} \E(m(u,v))$. The following three lemmas hold for all such chains. We do not claim that these results are new. In fact, the proofs are easy adaptations of the corresponding results for continuous-time chains (see \cite{AldousFill}), and we only include them for completeness.

The following lemma gives an exponential tail bound on the hitting time of a set in terms of the maximal expected hitting time of the set over all initial positions for the chain.

\begin{lemma} \label{Lemma:HittingTimeBound}
 Suppose $(V,P)$ is a mixing Markov chain with $B \subset V$. Let $T_B$ be the first hitting time of $B$ by a random walker on $(V,P)$. For a probability measure $\nu$ on $V$, let $\P_{\nu}$ denote the distribution of a random walker on $(V,P)$ with initial distribution $\nu$, and let $\E_{\nu}$ denote expectation with respect to $\P_{\nu}$. If $\delta_v$ is the point mass at $v$ in $V$, then we set $\P_v = \P_{\delta_v}$ and $\E_v = \E_{\delta_v}$. Define $m_B = \max_{v \in V} \E_v(T_B)$. Then for any probability measure $\nu$ on $V$, we have that
\begin{equation*}
\P_{\nu}(T_B > t) \leq \exp\biggl( - \frac{t}{e m_B}\biggr).
\end{equation*}
\end{lemma}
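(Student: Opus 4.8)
The plan is to convert the mean bound encoded in $m_B$ into an exponential tail bound by combining Markov's inequality with the Markov property, through a standard submultiplicativity argument. Throughout I would set $\phi(t) = \sup_{v \in V} \P_v(T_B > t)$; since every $\P_\nu$ is an average of the point-mass laws $\P_v$, it suffices to bound $\phi(t)$ and then average over $\nu$.

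First I would establish a uniform one-block estimate. For any $v \in V$ and any $s>0$, Markov's inequality together with $\E_v(T_B) \le m_B$ gives $\P_v(T_B > s) \le \E_v(T_B)/s \le m_B/s$, and taking $s = e\,m_B$ yields $\phi(e\,m_B) \le 1/e$, uniformly over the starting state.

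Next I would iterate this estimate using time-homogeneity and the Markov property. Conditioning on the position of the walker at time $s$,
\begin{equation*}
\P_\nu(T_B > s+r) = \sum_{w \in V} \P_\nu(T_B > s,\, X_s = w)\,\P_w(T_B > r) \le \phi(r)\,\P_\nu(T_B > s),
\end{equation*}
so $\phi$ is submultiplicative, $\phi(s+r) \le \phi(s)\,\phi(r)$. Combined with the one-block bound $\phi(e\,m_B)\le 1/e$, an induction then gives $\phi\bigl(k\,e\,m_B\bigr) \le e^{-k}$ for every integer $k \ge 0$.

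Passing from multiples of $e\,m_B$ to an arbitrary $t$ is the only delicate point, and I expect this bookkeeping to be the main (though minor) obstacle. Using monotonicity of $\phi$ with $k = \lfloor t/(e\,m_B)\rfloor$ immediately produces $\phi(t) \le e^{-\lfloor t/(e\,m_B)\rfloor}$, which already realizes the claimed exponential rate $1/(e\,m_B)$. To sharpen the integer-part loss and match the precise constant in the statement, I would not fix the block length at $e\,m_B$ but instead optimize it per $t$: Markov's inequality applied with block size $s = t/k$ gives $\phi(t) \le (k\,m_B/t)^k$ for every integer $k$, and choosing $k$ nearest to $t/(e\,m_B)$ (the minimizer of $(k\,m_B/t)^k$) drives this to the target rate. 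The structural heart of the argument — a single use of Markov's inequality to control one block, followed by the submultiplicativity supplied by the Markov property — is entirely routine; only the tuning of the block length, and hence of the constant, requires any care.
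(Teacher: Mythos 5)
Your argument is the paper's argument: a single application of Markov's inequality gives the one-block bound $\sup_v\P_v(T_B>s)\le m_B/s$, the Markov property gives submultiplicativity of $\phi(t)=\sup_v\P_v(T_B>t)$, induction gives $\phi(ks)\le (m_B/s)^k$, and one then takes $s=e\,m_B$. Up to that point the proposal is correct and matches the paper step for step; you are in fact more careful than the paper, which silently sets $t=ks$ with $s=e\,m_B$ as if $t/(e\,m_B)$ were an integer.

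The one place your write-up goes wrong is the claimed sharpening at the end. You assert that taking $\phi(t)\le (k\,m_B/t)^k$ and choosing the integer $k$ nearest to $t/(e\,m_B)$ ``drives this to the target rate'' $e^{-t/(e m_B)}$. It cannot: the function $k\mapsto (k\,m_B/t)^k$ attains its global minimum over \emph{real} $k>0$ at $k=t/(e\,m_B)$, where its value is exactly $e^{-t/(em_B)}$, so for every integer $k$ one has $(k\,m_B/t)^k\ge e^{-t/(em_B)}$, with equality only when $t/(e\,m_B)$ happens to be an integer. No choice of integer block count recovers the stated bound, and indeed the literal statement fails for small $t$ (take a mixing chain that moves almost deterministically around a cycle of length $m$ toward $B$: then $\P_0(T_B>t)=1$ for $t<m$ while $m_B\approx m$, so the right-hand side is strictly less than $1$). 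What your argument honestly proves is $\phi(t)\le e^{-\lfloor t/(e m_B)\rfloor}$, which is also all the paper's proof delivers, and which carries the correct exponential rate needed in every subsequent application (Lemmas \ref{Lemma:TailBound}, \ref{Lemma:FirstMomentBound}, \ref{Lemma:SecondMoment}). So: same approach as the paper, correct in substance, but you should state and use the floor version rather than claim the exact constant, since the exact form is unattainable by this method (and false).
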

\begin{proof}
First, observe that for any probability distribution $\nu$ on $V$, $s>0$, and integer $k \geq 1$, there exists a probability distribution $\theta$ on $V$ such that
\begin{align*}
 \P_{\nu}(T_B > k s \mid T_B > (k-1)s) = \P_{\theta}(T_B > s) \leq \max_{v \in V} \P_v(T_B > s),
\end{align*}
and therefore by Markov's inequality, we have
\begin{equation} \label{Eqn:CondIneq}
 \P_{\nu}(T_B > ks \mid T_B > (k-1)s) \leq \frac{ \max_{v \in V} \E_v(T_B) }{s} = \frac{m_B}{s}.
\end{equation}
We now prove by induction on $k$ that for any probability distribution $\nu$ on $V$ and $s>0$, it holds that
\begin{equation} \label{Eqn:InductionStatement}
 \P_{\nu}(T_B > ks) \leq \biggr(\frac{m_B}{s}\biggr)^k.
\end{equation}
Note that for $k=1$, we know from (\ref{Eqn:CondIneq}) that for any $\nu$ and $s>0$,
\begin{equation*}
\P_{\nu}(T_B > s) \leq \P_{\nu}( T_B > s \mid T_B >0) \leq \frac{m_B}{s},
\end{equation*}
which establishes the base case. Now suppose for induction that (\ref{Eqn:InductionStatement}) holds for some $k$. Then by (\ref{Eqn:InductionStatement}) and (\ref{Eqn:CondIneq}),
\begin{align*}
 \P_{\nu}(T_B > (k+1)s) & = \P_{\nu}(T_B > ks) \P_{\nu}( T_B > (k+1)s \mid T_B > ks) \\
 & \leq \biggl( \frac{m_B}{s} \biggr)^k \biggl( \frac{m_B}{s} \biggr) \\
 & = \biggl( \frac{m_B}{s} \biggr)^{k+1},
\end{align*}
which completes the induction.

Now let $t = ks$. Rewriting (\ref{Eqn:InductionStatement}) gives
\begin{equation} \label{Eqn:Rewrite}
\P_{\mu}( T_B > t) \leq \biggl( \frac{m_B}{s} \biggr)^{\frac{t}{s}}.
\end{equation}
Choosing $s = e m_B$, we obtain
\begin{equation*}
\P_{\mu}(T_B > t) \leq \exp\biggl( \frac{-t}{e m_B} \biggr),
\end{equation*}
as desired.
\end{proof}

The following lemma provides an exponential tail bound for the meeting time of any two random walkers in a chain in terms of the maximal expected meeting time over all starting positions. The proof relies on the simple fact that the meeting time of two walkers corresponds to the hitting time of the diagonal in the product chain. After making this connection, we apply Lemma \ref{Lemma:HittingTimeBound}.

\begin{lemma} \label{Lemma:TailBound}
 Suppose $(V,P)$ is a mixing Markov chain with maximal expected meeting time $m^*$. Then for any $u,v$ in $V$, it holds that
\begin{equation*}
 \P\bigl( m(u,v) > t \bigr) \leq \exp\biggl( - \frac{t}{e m^*} \biggr).
\end{equation*}
\end{lemma}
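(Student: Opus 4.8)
The plan is to realize the meeting time as a hitting time in the product chain and then invoke Lemma \ref{Lemma:HittingTimeBound}. First I would form the product chain on the state space $V \times V$ whose transition matrix $\widehat{P}$ is given by $\widehat{P}\bigl((a,b),(c,d)\bigr) = P(a,c)\,P(b,d)$; this is precisely the chain followed by two independent random walkers on $(V,P)$. Since $(V,P)$ is mixing, there is some $n$ with $P^n > 0$, and then $\widehat{P}^n\bigl((a,b),(c,d)\bigr) = P^n(a,c)\,P^n(b,d) > 0$, so $\widehat{P}^n > 0$ and the product chain is itself mixing. This is the only point at which mixing of $(V,P)$ is genuinely used, and I expect it to be the one place requiring a small verification before Lemma \ref{Lemma:HittingTimeBound} can be applied.

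Next I would take $B = \{ (w,w) : w \in V \} \subset V \times V$ to be the diagonal and let $T_B$ denote its first hitting time for the product chain. By construction, a walker on the product chain started at $(u,v)$ is a pair of independent walkers on $(V,P)$ started at $u$ and $v$, and the event that the pair occupies a diagonal state at time $t$ is exactly the event that the two walkers meet at time $t$. Hence the law of $T_B$ under $\P_{(u,v)}$ coincides with that of $m(u,v)$, and in particular $\P\bigl(m(u,v) > t\bigr) = \P_{(u,v)}(T_B > t)$ for every $t$.

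Finally I would identify the constant $m_B$ appearing in Lemma \ref{Lemma:HittingTimeBound}. Since diagonal states are hit instantaneously and an off-diagonal start $(u,v)$ gives $\E_{(u,v)}(T_B) = \E\bigl(m(u,v)\bigr)$, we have $m_B = \max_{(u,v) \in V \times V} \E_{(u,v)}(T_B) = \max_{u,v \in V} \E\bigl(m(u,v)\bigr) = m^*$. Applying Lemma \ref{Lemma:HittingTimeBound} to the product chain with the diagonal $B$ and the point mass $\nu = \delta_{(u,v)}$ then yields
\[
\P\bigl(m(u,v) > t\bigr) = \P_{(u,v)}(T_B > t) \leq \exp\biggl( - \frac{t}{e\, m_B} \biggr) = \exp\biggl( - \frac{t}{e\, m^*} \biggr),
\]
which is the desired bound. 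No step beyond the mixing verification presents any real difficulty; the entire content lies in the observation that meeting in $(V,P)$ is the same as hitting the diagonal in the product chain.
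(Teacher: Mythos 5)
Your proposal is correct and follows essentially the same route as the paper's proof: pass to the product chain $(V\times V, P_\times)$, identify $m(u,v)$ with the hitting time of the diagonal, note $m_B = m^*$, and apply Lemma \ref{Lemma:HittingTimeBound}. Your explicit check that the product chain is mixing is a small addition the paper leaves implicit, but it is not a different argument.
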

\begin{proof}
Let $(V \times V, P_{\times})$ denote the product chain given by $P_{\times}((a,b),(c,d)) = P(a,c) \, P(b,d)$, and let $\P_{(u,v)}^{\times}$ denote the probability measure corresponding to the chain $(V \times V, P_{\times})$ started at $(u,v)$. Note that $\P(m(u,v)>t) = \P_{(u,v)}^{\times}( T_D > t)$, where $T_D$ is the hitting time of the diagonal $D = \{ (v,v) : v \in V\}$. Also, $m^* = m_D$. By Lemma \ref{Lemma:HittingTimeBound} applied to $(V \times V, P_{\times})$, we obtain
\begin{equation*}
 \P(m(u,v)>t) = \P_{(u,v)}^{\times}( T_D > t) \leq \exp\biggl( -\frac{t}{e  m_D} \biggr) = \exp\biggl( - \frac{t}{e m^*} \biggr).
\end{equation*}
\end{proof}

The following lemma, whose proof is trivial, states that the meeting time of two random walkers, started from arbitrary initial positions, is less than or equal to the full coalescence time.

\begin{lemma} \label{Lemma:MeetingTimeLB}
Suppose $(V,P)$ is a mixing Markov chain with coalescence time $C$ and maximal expected meeting time $m^*$. For $u,v$ in $V$, denote by $m(u,v)$ the meeting time of random walkers on $(V,P)$ started at $u$ and $v$, respectively. Then for any $u,v$ in $V$,
\begin{equation*}
m(u,v) \leq C, 
\end{equation*}
and therefore
\begin{equation*}
m^* \leq \E(C).
\end{equation*}
\end{lemma}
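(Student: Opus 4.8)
The plan is to exhibit a single coupling of the full coalescing system with two independent walkers started at $u$ and $v$, under which the meeting time of the two walkers is dominated pathwise by the full coalescence time $C$; the inequality on expectations then follows immediately. The statement $m(u,v) \le C$ should be read in this coupled sense, i.e.\ as a stochastic domination, and it is the expectation inequality $m^* \le \E(C)$ that is the substantive conclusion.

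First I would set up the graphical (arrow) construction of the coalescing random walk. For each state $w \in V$ and each time $t \ge 0$, let $\xi_{w,t}$ be independent random variables with $\P(\xi_{w,t}=y)=P(w,y)$, and declare that any walker occupying $w$ at time $t$ moves to $\xi_{w,t}$ at time $t+1$. Since walkers sharing a site use the same arrow, they move together forever after, so this construction realizes the coalescing dynamics: the resulting clusters each perform a random walk governed by $P$ and merge on contact.

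Next I would track two lineages in this system: let $X_t$ denote the position at time $t$ of the lineage started at $u$ (so $X_0=u$ and $X_{t+1}=\xi_{X_t,t}$), and likewise $Y_t$ for the lineage started at $v$. The key observation is that at every time before they coincide, $X_t$ and $Y_t$ occupy distinct sites and hence use distinct, therefore independent, arrow variables; consequently $(X_t)$ and $(Y_t)$ evolve as two independent $P$-walks up to their first coincidence, and $\tau=\inf\{t : X_t=Y_t\}$ has exactly the distribution of $m(u,v)$. On the other hand, at the full coalescence time $C$ all walkers occupy a single site, so in particular $X_C=Y_C$, giving $\tau \le C$ on every sample path. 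Taking expectations yields $\E(m(u,v))=\E(\tau)\le \E(C)$, and maximizing over $u,v$ gives $m^* \le \E(C)$.

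The only point requiring care---and the reason the result is not entirely a tautology---is the independence claim for the two lineages up to their meeting time: a priori the $u$-lineage may coalesce with some third walker before encountering the $v$-lineage, and one must check that such an intervening merger neither couples the motions of the two tracked lineages nor alters the law of either one. The arrow construction settles this cleanly, since the future motion of a cluster depends only on its current site and the arrows there, independently of how many walkers it carries, so no separate argument about a third walker is needed. I expect this to be the only nontrivial step, the remainder being the immediate pathwise comparison at time $C$.
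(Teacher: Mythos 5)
Your proposal is correct and follows essentially the same route as the paper: the pathwise observation that at the full coalescence time all walkers have met, in particular those started at $u$ and $v$, so the meeting time is dominated by $C$ and the expectation inequality follows. The paper's proof is a two-line assertion of exactly this fact; your additional use of the graphical construction to verify that the two tracked lineages, before they coincide, genuinely have the law of two independent $P$-walks (so that their first coincidence really is distributed as $m(u,v)$) is a detail the paper leaves implicit, and you are right that it is the only point requiring any care.
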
 
\begin{proof}
Let $u,v$ be in $V$. Coalescence implies that all walkers have met, including the walkers started at $u$ and $v$, respectively. Therefore $m(u,v) \leq C$ with probability one.
\end{proof}

\subsection{Moment bounds} \label{Sect:Moments}

At this point, we turn to sequences of $n$-block chains associated to a mixing Markov chain $(V,P)$. The following two lemmas obtain bounds on the first two moments of the full coalescence time on such sequences.
These bounds are essentially consequences of the tail bound given by Lemma \ref{Lemma:TailBound}.

\begin{lemma} \label{Lemma:FirstMomentBound}
 Suppose $(V,P)$ is a mixing Markov chain with $n$-block chain $(V_n,P_n)$. Then there exists a constant $K>0$ such that for each $n$ in $\N$, it holds that
\begin{equation*}
 \E(C_n) \leq K n \, m_n^*.
\end{equation*}
\end{lemma}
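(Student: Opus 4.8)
The goal is to bound $\E(C_n)$ by $K n\, m_n^*$ for a constant $K$ independent of $n$. The plan is to combine the general upper bound on coalescence time in terms of the maximal expected meeting time (mentioned in the paper's overview, of the form $\E(C) \leq K' (\log|V|+1)\, m^*$ for an absolute constant $K'$) with the fact that the state space $V_n$ grows only exponentially in $n$.

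First I would recall that for the $n$-block chain $(V_n,P_n)$, the maximal expected meeting time is $m_n^*$, and the coalescence time on this chain is $C_n$. The key structural fact is that $|V_n| \leq |V|^n$, since $V_n \subseteq V^n$. Hence $\log|V_n| \leq n \log|V|$. This is exactly where the linear factor of $n$ in the claimed bound comes from: the logarithmic dependence of coalescence time on the size of the state space becomes linear in $n$ once we substitute the exponential growth of $|V_n|$.

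The main technical ingredient I would need is a general upper bound on the expected coalescence time of the form $\E(C) \leq c\,(\log|V| + 1)\, m^*$, valid for any mixing chain $(V,P)$ with an absolute constant $c$. Applied to $(V_n,P_n)$, this yields
\begin{equation*}
 \E(C_n) \leq c\,(\log|V_n| + 1)\, m_n^* \leq c\,(n\log|V| + 1)\, m_n^* \leq c\,(\log|V| + 1)\, n\, m_n^*,
\end{equation*}
and setting $K = c\,(\log|V|+1)$ (a constant depending only on $(V,P)$, not on $n$) completes the argument. The derivation of the general upper bound itself proceeds by an Aldous--Fill style argument: the exponential tail bound of Lemma \ref{Lemma:TailBound} controls the probability that any fixed pair of walkers has not yet met by time $t$, and a union/halving argument over the $\log|V|$ scales at which the number of surviving clusters is roughly halved controls the total coalescence time.

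I expect the main obstacle to be establishing (or invoking) the general coalescence upper bound with the correct logarithmic dependence on $|V|$ and an \emph{absolute} multiplicative constant. The subtlety is that coalescence requires all walkers to merge, and naively union-bounding over all pairs would cost a factor of $|V|^2$ rather than $\log|V|$; the refined argument instead tracks the expected time to halve the number of distinct clusters and sums over the $O(\log|V|)$ halving stages, using the tail bound from Lemma \ref{Lemma:TailBound} at each stage to ensure each halving takes expected time comparable to $m^*$. Once this general bound is in hand, substituting $|V_n| \leq |V|^n$ is routine and the factor $n$ emerges immediately.
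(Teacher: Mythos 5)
Your proposal arrives at the correct conclusion by the correct final reduction --- the whole point of this lemma is that $|V_n| \leq |V|^n$ turns a $\log|V_n|$ factor into $n\log|V|$ --- and the paper does exactly this substitution at the end. However, your route to the general bound $\E(C) \leq c(\log|V|+1)m^*$ differs from the paper's, and the motivation you give for it rests on a misconception. You write that ``naively union-bounding over all pairs would cost a factor of $|V|^2$ rather than $\log|V|$,'' and therefore propose a halving argument over $O(\log|V|)$ stages of cluster merging. In fact the union bound is exactly what the paper uses, and it costs only a logarithm: fixing a reference vertex $1$, full coalescence occurs once the walker from $1$ has met every other walker, so $C_n \leq \max_{u} m_n(1,u)$ and $\P(C_n > t) \leq \sum_{u \in V_n}\P(m_n(1,u)>t) \leq |V_n|\exp(-t/(e\,m_n^*))$ by Lemma \ref{Lemma:TailBound}. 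The multiplicative prefactor $|V_n|$ sits inside $\min(1,\cdot)$, and integrating $\min(1, A e^{-at})$ over $t$ yields $a^{-1}(\log A + 1)$ --- so the union bound's prefactor becomes an \emph{additive} logarithm in the expectation, and even a union over all $|V_n|^2$ pairs would only double the constant. The halving argument you sketch (tracking the expected time for the number of clusters to halve) can be made rigorous, but it is more delicate than needed here --- the pairs' meeting events are dependent, so each stage requires a Markov/counting argument --- and its real use in the literature is to remove the logarithmic factor entirely (as in Oliveira's work), not to obtain it. If you replace the halving step with the one-line union bound against a fixed reference walker, your proof becomes the paper's proof.
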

\begin{proof}
Let $1$ be an arbitrary vertex in $V_n$. Note that if all pairs of walkers have met at time $t$, then $C_n \leq t$, and so $C_n \leq \max_{u \in V_n} m_n(1,u)$. Then
\begin{align} \label{Eqn:Reds}
\begin{split}
\P(C_n > t) & \leq \P \biggl( \max_{u \in V_n} m_n(1,u) > t \biggr) \\ 
& = \P\biggl( \bigcup_{u \in V_n} \{ m_n(1,u) > t\} \biggr) \\
& \leq \sum_{u \in V_n} \P(m_n(1,u) > t).
\end{split}
\end{align}
Then by tail-sum formula, Equation (\ref{Eqn:Reds}), and Lemma \ref{Lemma:TailBound}, we have that
\begin{align} \label{Eqn:Cardinals}
\begin{split}
\E(C_n) & = \sum_{t > 0} \P(C_n > t) \\
 & \leq \sum_{t > 0} \sum_{u \in V_n} \P( m_n(1,u) > t) \\
 & \leq \sum_{t > 0} \min\biggl(1,  |V_n| \exp\biggl( - \frac{t}{e m_n^*} \biggr) \biggr). 
\end{split}
\end{align}
By calculus, we note that
\begin{equation} \label{Eqn:Bulls}
\sum_{t > 0} \min\biggl(1,  A \exp\biggl( - a t \biggr) \biggr) \leq \frac{1}{a} \bigl(\log A+1 \bigr).
\end{equation}
Combining (\ref{Eqn:Cardinals}) and (\ref{Eqn:Bulls}), we obtain that
\begin{equation} \label{Eqn:Grasshoppers}
\E(C_n) \leq e \bigl( \log|V_n| + 1\bigr) \, m_n^*.
\end{equation}
Notice that $|V_n| \leq |V|^n$. Set $K = e(\log |V| +1)$. Then (\ref{Eqn:Grasshoppers}) implies that
\begin{equation*}
\E(C_n) \leq K n m_n^*,
\end{equation*}
as desired.
\end{proof}

In similar fashion, we now obtain a bound on the second moment of $C_n$ along a sequence of $n$-block chains.

\begin{lemma} \label{Lemma:SecondMoment}
 Suppose $(V,P)$ is a mixing Markov chain with $n$-block chain $(V_n,P_n)$. Then there exists $K>0$ such that for each $n$ in $\N$, it holds that
\begin{equation*}
 \E \bigl( C_n^2 \bigr) \leq K \, n^2 \, \E (C_n)^2 .
\end{equation*}
\end{lemma}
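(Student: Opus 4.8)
The plan is to mirror the proof of Lemma \ref{Lemma:FirstMomentBound}, replacing the first-moment tail-sum formula by its second-moment analogue and then controlling the extra polynomial factor that appears. First I would recall the tail-sum identity for a nonnegative integer-valued random variable, namely $\E(C_n^2) = \sum_{t\geq 0}(2t+1)\,\P(C_n > t)$, which follows by telescoping $k^2-(k-1)^2 = 2k-1$. The key input is already available: combining the union bound (\ref{Eqn:Reds}) with the tail bound of Lemma \ref{Lemma:TailBound} yields
\[
\P(C_n > t) \leq \min\!\left(1,\; |V_n|\exp\!\left(-\frac{t}{e\,m_n^*}\right)\right)
\]
for every $t$. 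Substituting this into the tail-sum formula reduces the problem to estimating a single real-variable sum of the form $\sum_{t\geq 0}(2t+1)\min(1, A e^{-at})$ with $A = |V_n| \geq 1$ and $a = 1/(e\,m_n^*)$.

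The main work, and the step I expect to be the principal obstacle, is establishing the calculus estimate
\[
\sum_{t\geq 0}(2t+1)\min\!\left(1, A e^{-at}\right) \leq K_0\,\frac{(\log A + 1)^2}{a^2}
\]
for an absolute constant $K_0$, valid for all $A\geq 1$ and $0 < a \leq 1$; this is the second-moment counterpart of the elementary inequality (\ref{Eqn:Bulls}). I would prove it by splitting the sum at the threshold $t_0 = (\log A)/a$, where the two arguments of the minimum cross. On $\{t \leq t_0\}$ the minimum equals $1$, and $\sum_{t=0}^{\lfloor t_0\rfloor}(2t+1) = (\lfloor t_0\rfloor+1)^2 \leq (t_0+1)^2$, producing the leading $(\log A)^2/a^2$ term. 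On $\{t > t_0\}$ the minimum equals $Ae^{-at}$; using $Ae^{-at_0}=1$, the geometric sums $\sum_{j} e^{-aj} = (1-e^{-a})^{-1}$ and $\sum_{j} j e^{-aj} = e^{-a}(1-e^{-a})^{-2}$, together with the elementary bound $1 - e^{-a} \geq a/2$ on $(0,1]$, control this tail by a quantity of order $t_0/a + 1/a^2$. Collecting terms and using $a \leq 1$ and $\log A \geq 0$ gives the claimed estimate, with the $(\log A+1)^2/a^2$ factor absorbing all lower-order contributions.

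Finally I would assemble the pieces. Plugging $A = |V_n|$ and $a = 1/(e\,m_n^*)$ into the calculus estimate yields $\E(C_n^2) \leq K_0\, e^2\, (m_n^*)^2\, (\log|V_n| + 1)^2$. Since two distinct states meet in time at least one, a nontrivial chain has $m_n^* \geq 1$, so $a \leq 1/e \leq 1$ and the estimate applies; the trivial case $|V|=1$ gives $\E(C_n^2) = 0 = \E(C_n)^2$ directly. Lemma \ref{Lemma:MeetingTimeLB} supplies $m_n^* \leq \E(C_n)$, hence $(m_n^*)^2 \leq \E(C_n)^2$, while the crude bound $\log|V_n| \leq n\log|V|$ gives $(\log|V_n|+1)^2 \leq n^2(\log|V|+1)^2$. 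Combining these yields $\E(C_n^2) \leq K\,n^2\,\E(C_n)^2$ with $K = K_0\, e^2(\log|V|+1)^2$, as desired.
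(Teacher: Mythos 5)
Your proposal is correct and follows essentially the same route as the paper: union bound over $\{m_n(1,v)>t\}$, the exponential tail bound of Lemma \ref{Lemma:TailBound}, a second-moment tail-sum formula reduced to a calculus estimate of order $(\log A+1)^2/a^2$, and finally $m_n^*\leq \E(C_n)$ from Lemma \ref{Lemma:MeetingTimeLB}. The only (cosmetic) difference is that you write $\E(C_n^2)=\sum_{t\geq 0}(2t+1)\P(C_n>t)$ where the paper uses $\E(C_n^2)=\sum_{t>0}\P(C_n>\sqrt{t})$; the resulting bounds are equivalent, and your explicit handling of the $a\leq 1$ restriction and the trivial case is a harmless refinement.
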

\begin{proof}
Select an arbitrary vertex in $V_n$ and denote it by $1$. Since full coalescence implies that the walker started at vertex $1$ has met every other walker, we have that
\begin{equation*}
 C_n \leq \max_{v \in V_n} m_n(1,v).
\end{equation*}
Then for all $t>0$,
\begin{align} \label{Eqn:Nationals}
 \begin{split}
\P(C_n > t) & \leq \P\biggl(\max_{v \in V_n} m_n(1,v) >t \biggr) \\
 & \leq \P \Biggl( \bigcup_{v \in V_n} \{m_n(1,v) > t\} \Biggr) \\
 & \leq \sum_{v \in V_n} \P(m_n(1,v) >t).  
 \end{split}
\end{align}
Thus, by (\ref{Eqn:Nationals}) and Lemma \ref{Lemma:TailBound}, 
\begin{equation} \label{Eqn:Braves}
 \P\bigl(C_n > \sqrt{t}\bigr) \leq \sum_{v \in V_n} \P\bigl(m_n(1,v) > \sqrt{t}\bigr) \leq |V_n| \exp\biggl(-\frac{\sqrt{t}}{ e m_n^*}\biggr).
\end{equation}
Hence, by Tail-Sum formula and (\ref{Eqn:Braves}), we have that
\begin{align} \label{Eqn:Rockies}
\begin{split}
 \E\Bigl(C_n^2\Bigr) &  = \sum_{t > 0} \P \Bigl(C_n^2 > t \Bigr) \\
 & = \sum_{t >0} \P \Bigl(C_n > \sqrt{t}\Bigr) \\
 & \leq \sum_{t > 0} \min\biggl(1,|V_n| \exp\biggl(-\frac{\sqrt{t}}{e m_n^*}\biggr) \biggr).
\end{split}
\end{align}
By calculus, we see that
\begin{equation} \label{Eqn:Calculus}
 \sum_{t > 0} \min\Bigl(1,A e^{-a \sqrt{t}}\Bigr) \leq \biggl( \frac{1}{a^2} \biggr) \bigl( (\log A)^2 + 2 \log A +1\bigr).
\end{equation}
Combining (\ref{Eqn:Rockies}) and (\ref{Eqn:Calculus}), we obtain
\begin{equation*}
 \E \bigl(C_n^2 \bigr) \leq e^2 (m_n^*)^2 \bigl( (\log |V_n|)^2 + 2 \log |V_n| + 1 \bigr).
\end{equation*}
Note that $\log|V_n| \leq n \log|V|$. Setting $K = e^2( 3 \log|V|  +1)$, we have shown that 
\begin{equation*}
 \E\bigl(C_n^2\bigr) \leq K n^2 (m_n^*)^2.
\end{equation*}
Note that $\E(C_n) \geq m_n^*$ by Lemma \ref{Lemma:MeetingTimeLB}, and therefore
\begin{equation*}
 \E\bigl(C_n^2\bigr) \leq K \, n^2 \, \E(C_n)^2,
\end{equation*}
as desired.
\end{proof}

\subsection{Expected coalescence and meeting times}

The goal of this section is to prove Theorem \ref{Thm:Expectations}, which gives the exponential growth rates of expected coalescence and meeting times.

Recall the following notations from Section \ref{Sect:Preliminaries}. Let $(V,P)$ be a mixing Markov chain with $n$-block chain $(V_n,P_n)$. We denote by $\mu$ the measure on $V^{\N}$ corresponding to the chain $(V,P)$, and we let $X$ denote the topological support of $\mu$. We have
\begin{equation*}
 \Delta_n = \sum_{u \in V_n} \mu(u)^2 .
\end{equation*}
Also, we define the matrix $Q$ such that for $u,v$ in $V$, it holds that $Q(u,v)=P(u,v)^2$. Let $\lambda$ be the Perron eigenvalue of $Q$ (also known as the spectral radius of $Q$), and define $L(V,P) = -\log\lambda$.
These notations, along with some of the facts in Section \ref{Sect:ThermFormalism}, are used throughout this section.

\begin{proposition} \label{Prop:DeltanLimit}
Suppose $(V,P)$ is a mixing Markov chain. Then
 \begin{equation*}
  \lim \frac{1}{n} \log \Delta_n = - L(V,P).
 \end{equation*}
\end{proposition}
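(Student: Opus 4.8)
The plan is to reduce $\Delta_n$ to a sum that the thermodynamic formalism of Section~\ref{Sect:ThermFormalism} identifies with the pressure of an explicit locally constant potential, and then invoke Fact~\ref{Fact:PandL}. First I would rewrite the summand. For $u = u_1 \cdots u_n$ in $V_n$ we have $\mu(u) = \pi(u_1)\prod_{j=1}^{n-1} P(u_j,u_{j+1})$, so squaring and using $Q(u,v) = P(u,v)^2$ gives
\begin{equation*}
\mu(u)^2 = \pi(u_1)^2 \prod_{j=1}^{n-1} Q(u_j,u_{j+1}).
\end{equation*}
Summing over $u \in V_n$ then expresses $\Delta_n$ as a sum over admissible length-$n$ words of a $\pi$-weighted product of $Q$-entries along the word.

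Next I would strip off the boundary weight $\pi(u_1)^2$. Since $(V,P)$ is mixing, $\pi$ is strictly positive, so with $\pi_{\min} = \min_v \pi(v) > 0$ and $\pi_{\max} = \max_v \pi(v)$ we have
\begin{equation*}
\pi_{\min}^2 \sum_{u \in V_n} \prod_{j=1}^{n-1} Q(u_j,u_{j+1}) \;\le\; \Delta_n \;\le\; \pi_{\max}^2 \sum_{u \in V_n} \prod_{j=1}^{n-1} Q(u_j,u_{j+1}).
\end{equation*}
The prefactors are constants, so they vanish after applying $\frac{1}{n}\log(\cdot)$ and letting $n \to \infty$; hence $\lim_n \frac{1}{n}\log\Delta_n$ equals $\lim_n \frac{1}{n}\log \sum_{u\in V_n}\prod_{j=1}^{n-1} Q(u_j,u_{j+1})$, once the latter is shown to exist. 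Writing $f(x) = \log Q(x_1,x_2)$, for any $x(u) \in X$ with $x(u)_1^n = u$ one has $\sum_{j=0}^{n-2} f\circ\sigma^j(x(u)) = \sum_{j=1}^{n-1} \log Q(u_j,u_{j+1})$, so the inner sum is exactly the quantity appearing in the definition of the pressure $\mathcal{P}(f)$, up to the discrepancy between $n-1$ and $n$ summands. That discrepancy is harmless: the extra factor is $Q(x_n,x_{n+1})$, which lies between the smallest and largest positive entries of $Q$, hence contributes a bounded multiplicative factor that disappears in the $\frac{1}{n}\log$ limit. By the existence of the pressure limit and Fact~\ref{Fact:PandL}, this limit equals $\mathcal{P}(f) = \log\lambda = -L(V,P)$, which is the claim.

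The one point that requires care is ensuring that $\Delta_n$ genuinely grows like $\lambda^n$ rather than decaying strictly faster, since a priori the $\frac{1}{n}\log$ limit recovers $\log\lambda$ only if the relevant sums are not exponentially smaller than $\lambda^n$. This is guaranteed because $Q(u,v) > 0$ exactly when $P(u,v) > 0$, so $Q$ inherits primitivity from the mixing chain $(V,P)$; the Perron--Frobenius theorem then furnishes a strictly positive $\lambda$ with strictly positive left and right Perron eigenvectors, and it is precisely this positivity that both the pressure formulation and the matrix estimate rely on. Indeed, an alternative to the thermodynamic route is to note that, with $a(u) = \pi(u)^2$, the identity above reads $\Delta_n = a^{\mathsf T} Q^{n-1}\mathbf{1}$; the asymptotics $\lambda^{-(n-1)} Q^{n-1} \to r\ell^{\mathsf T}$ for primitive $Q$ (with $r,\ell$ the positive Perron eigenvectors normalized by $\ell^{\mathsf T} r = 1$) give $\lambda^{-(n-1)}\Delta_n \to (a^{\mathsf T} r)(\ell^{\mathsf T}\mathbf{1}) > 0$, whence $\frac{1}{n}\log\Delta_n \to \log\lambda = -L(V,P)$. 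I expect the bookkeeping around the boundary weight and the off-by-one in the number of summands to be the only genuine obstacle, and the positivity supplied by primitivity of $Q$ resolves it.
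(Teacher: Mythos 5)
Your argument is correct and follows essentially the same route as the paper: strip off the boundary weight $\pi(u_1)^2$ by positive constants, recognize the remaining sum as the pressure sum for $f(x)=\log Q(x_1,x_2)=2\log P(x_1,x_2)$, and conclude via Fact~\ref{Fact:PandL} that the limit is $\mathcal{P}(f)=\log\lambda=-L(V,P)$. The direct Perron--Frobenius computation $\Delta_n = a^{\mathsf T}Q^{n-1}\mathbf{1}$ you sketch as an alternative is a valid, more elementary bypass of the thermodynamic formalism, but the main line of reasoning matches the paper's proof.
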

\begin{proof}
Let $f : X \to \R$ be the function $f(x) = 2 \log P(x_1, x_2)$. Then 
\begin{align}
\begin{split} \label{Delta_nExp}
 \Delta_n & = \sum_{u \in V_n} \pi_n(u)^2 \\
 & = \sum_{u \in V_n} \mu(u)^2 \\
 & = \sum_{v \in V} \mu(v)^2 \sum_{\substack{u \in V_n \\ u_1 = v}} \mu(u \mid u_1=v)^2 \\
 & = \sum_{v \in V} \mu(v)^2 \sum_{\substack{u \in V_n \\ u_1 = v}} \prod_{j=1}^{n-1} P(u_j,u_{j+1})^2 \\
 & = \sum_{v \in V} \mu(v)^2 \sum_{\substack{u \in V_n \\ u_1 = v}} \exp\biggl( \sum_{j=1}^{n-1} 2 \log P(u_j,u_{j+1}) \biggr) \\
 & = \sum_{v \in V} \mu(v)^2 \sum_{\substack{u \in V_n \\ u_1 = v}} \exp\Biggl( \sum_{j=0}^{n-2} f \circ \sigma^j (x_u) \Biggr),
\end{split}
\end{align}
where $x_u$ denotes any point in $X$ such that $x_1^{n} = u$.
Define
\begin{equation*}
 Q_n(f) = \sum_{u \in V_n} \exp\Biggl( \sum_{j=0}^{n-2} f \circ \sigma^j (x_u) \Biggr).
\end{equation*}
Set $C_1 = \min_{v \in V} \mu(v)^2$ and $C_2 = \max_{v \in V} \mu(v)^2$. Note that $C_1 >0$ since $(V,P)$ is mixing. Then by (\ref{Delta_nExp}), for all $n$, we have
\begin{equation} \label{Eqn:Marlins}
 C_1 Q_n(f) \leq \Delta_n \leq C_2 Q_n(f).
\end{equation}
By standard results in the thermodynamic formalism (see Section \ref{Sect:ThermFormalism}), there exists a constant $\mathcal{P}(f)$, called the pressure of $f$, such that
\begin{equation} \label{Eqn:Pirates}
\lim_n \frac{1}{n} \log Q_n(f) = \mathcal{P}(f).
\end{equation}
Furthermore, by Fact \ref{Fact:PandL}, $\mathcal{P}(f) = \log \lambda = - L(V,P)$. Then by (\ref{Eqn:Marlins}) and (\ref{Eqn:Pirates}), we have that
\begin{equation*}
 \lim_n \frac{1}{n} \log \Delta_n = \lim_n \frac{1}{n} \log Q_n(f) = \mathcal{P}(f) =  -L(V,P).
\end{equation*}
\end{proof}

Having established the exponential rate of decay of $\Delta_n$ in Proposition \ref{Prop:DeltanLimit}, we now estimate meeting times and coalescence times in terms of $\Delta_n$. We begin these estimates with a lemma.

\begin{lemma} \label{Lemma:MeetingTimeBound}
Suppose $(V,P)$ is a mixing Markov chain. For any $\epsilon >0$, there exists $T>0$ such that for any $n$ and any $u,v$ in $V_n$, 
\begin{equation*}
 \mu \times \mu \biggl( x_{n+T}^{2n+T-1} = y_{n+T}^{2n+T-1} \Bigl| \, x_1^n = u, \; y_1^n = v  \biggr) \geq (1-\epsilon) \Delta_n.
\end{equation*}
\end{lemma}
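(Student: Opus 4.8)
The plan is to interpret the left-hand side as the probability that two independent random walkers on the $n$-block chain, started from $u$ and $v$, occupy the same state at time $n+T$, and to show that this collision probability is close to the stationary collision probability $\Delta_n$ once $T$ is large enough for the underlying chain to mix. The crucial feature to establish is that the required buffer length $T$ depends only on $(V,P)$ and $\epsilon$, never on $n$.

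First I would invoke the Markov property to replace the conditioning events $\{x_1^n = u\}$ and $\{y_1^n = v\}$ by the single-symbol conditioning $\{x_n = u_n\}$ and $\{y_n = v_n\}$, since the law of $x_{n+T}^{2n+T-1}$ given $x_1^n$ depends only on $x_n$. Writing $a = u_n$ and $b = v_n$ for the terminal symbols and using independence of $x$ and $y$ under $\mu \times \mu$, the conditional probability becomes
\begin{equation*}
\sum_{w \in V_n} \P\bigl(x_{n+T}^{2n+T-1} = w \mid x_n = a\bigr)\, \P\bigl(y_{n+T}^{2n+T-1} = w \mid y_n = b\bigr).
\end{equation*}
A direct computation with the Markov property gives
\begin{equation*}
\P\bigl(x_{n+T}^{2n+T-1} = w \mid x_n = a\bigr) = P^T(a, w_1)\prod_{j=1}^{n-1}P(w_j, w_{j+1}) = \frac{P^T(a, w_1)}{\pi(w_1)}\,\mu(w),
\end{equation*}
where the last equality uses the formula $\mu(w) = \pi(w_1)\prod_{j=1}^{n-1}P(w_j, w_{j+1})$ together with the positivity of $\pi$ on the support of a mixing chain.

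Substituting this identity and its analogue for $y$ into the sum, the entire $(n-1)$-fold product of transition probabilities is absorbed into $\mu(w)^2$, and what remains is a prefactor depending only on the first symbol $w_1$:
\begin{equation*}
\sum_{w \in V_n}\frac{P^T(a, w_1)\,P^T(b, w_1)}{\pi(w_1)^2}\,\mu(w)^2.
\end{equation*}
Since $(V,P)$ is mixing on a finite state space, $P^T(c,d) \to \pi(d)$ uniformly over $c,d$ as $T \to \infty$, with $\pi(d)>0$ everywhere; hence for any $\delta>0$ I may fix $T$ (independent of $n$) so that $P^T(c,d) \geq (1-\delta)\pi(d)$ for all $c,d \in V$. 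This bounds the prefactor below by $(1-\delta)^2$, yielding a lower bound of $(1-\delta)^2 \sum_{w} \mu(w)^2 = (1-\delta)^2 \Delta_n$, and choosing $\delta$ with $(1-\delta)^2 \geq 1-\epsilon$ completes the argument.

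The only point requiring care — and the reason the statement is useful — is the uniformity of $T$ in $n$. This is exactly what the cancellation above secures: because the bulk of each block's $\mu$-weight, namely $\prod_{j=1}^{n-1}P(w_j,w_{j+1})$, is common to the conditioned law and to $\mu$, the entire discrepancy from stationarity is concentrated in the single initial symbol $w_1$, whose relaxation is governed by the fixed mixing rate of the underlying chain $(V,P)$ rather than by anything growing with the block length $n$. I therefore expect no genuine obstacle beyond bookkeeping, provided this uniform mixing bound on $P^T$ is stated cleanly before the estimate.
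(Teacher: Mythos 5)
Your proposal is correct and follows essentially the same route as the paper: both arguments reduce the conditioning to the terminal symbols $u_n, v_n$, observe that the conditional probability of seeing block $w$ at time $n+T$ equals $P^T(u_n,w_1)\,\mu(w)/\pi(w_1)$ (so that the $n$-dependent part of the block's weight cancels against $\mu(w)$), and then invoke the convergence theorem for the mixing chain to make the remaining prefactor at least $1-\epsilon$ uniformly in $n$. The only cosmetic difference is that the paper phrases the mixing step via the product chain $P_{\times}^T((a,b),(c,d))/(\pi(c)\pi(d)) \geq 1-\epsilon$, whereas you bound each factor $P^T(\cdot,\cdot)/\pi(\cdot)$ by $1-\delta$ separately and take $(1-\delta)^2 \geq 1-\epsilon$.
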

\begin{proof}
 Let $(V\times V,P_{\times})$ be the direct product Markov chain, defined for $a,b,c,d$ in $V$ by
\begin{equation*}
P_{\times}((a,b),(c,d)) = P(a,c) P(b,d).
\end{equation*}
Since $(V,P)$ is mixing, the chain $(V \times V, P_{\times})$ is also mixing. Also, the stationary distribution for $(V \times V, P_{\times})$ is given by $\pi_{\times}((a,b)) = \pi(a)\pi(b)$. By the convergence theorem for mixing Markov chains (see \cite{AldousFill} or \cite{LPW}), for any $a,b,c,d$ in $V$, we have that
\begin{equation*}
\lim_n P_{\times}^n((a,b),(c,d)) = \pi(c)\pi(d).
\end{equation*}
Therefore, for $\epsilon >0$, there exists $T>0$ such that
\begin{equation} \label{Eqn:Mets}
\min_{a,b,c,d \in V} \frac{P_{\times}^T((a,b),(c,d))}{\pi(c)\pi(d)} \geq 1-\epsilon.
\end{equation}
Then for $u,v,w$ in $V_n$, the Markov property and (\ref{Eqn:Mets}) give that
\begin{align*}
\mu \times \mu \biggl(  x_{n+T}^{2n+T-1} = y_{n+T}^{2n+T-1}  = w \biggl| & \, x_1^n = u, \, y_1^n = v \biggr) \\ & = P_{\times}^T((u_n,v_n),(w_1,w_1)) \frac{\mu(w)^2}{\pi(w_1) \pi(w_1)} \\ 
 & \geq (1-\epsilon) \mu(w)^2.
\end{align*}
Summing over $w$ in $V_n$, we obtain
\begin{align*}
\mu \times \mu \biggl(  x_{n+T}^{2n+T-1} = y_{n+T}^{2n+T-1}  \biggl| & \, x_1^n = u, \, y_1^n = v \biggr) \\ 
& \geq (1-\epsilon) \sum_{w \in V_n} \mu(w)^2 \\
& = (1-\epsilon) \Delta_n.
\end{align*}
\end{proof}

The following proposition relates expected meeting times to $\Delta_n$.

\begin{proposition} \label{Prop:nBlockBounds}
Suppose $(V,P)$ is mixing Markov chain. Then there exists $K >0$ such that for large $n$,
\begin{equation*}
 \frac{1}{3} \frac{1}{\Delta_n} \leq \mbar_n \leq m_n^* \leq K n \frac{1}{\Delta_n}.
\end{equation*}
\end{proposition}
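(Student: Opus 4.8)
The plan is to treat the three inequalities separately, the middle one being immediate and the two outer ones carrying the content. The bound $\mbar_n \le m_n^*$ holds because $\mbar_n = \sum_{u,v} \pi_n(u)\pi_n(v)\E(m_n(u,v))$ is a convex combination of the values $\E(m_n(u,v))$, each of which is at most $m_n^* = \max_{u,v}\E(m_n(u,v))$. So the real work is the lower bound $\tfrac13 \Delta_n^{-1} \le \mbar_n$ and the upper bound $m_n^* \le Kn\Delta_n^{-1}$. Throughout I would use that for a nontrivial chain $\Delta_n \to 0$ as $n\to\infty$ (indeed $\Delta_n = \sum_u \mu(u)^2 \le \max_u \mu(u) \to 0$), so that phrases like ``for large $n$'' may absorb additive constants.

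For the lower bound I would interpret $\mbar_n$ as the expected first meeting time $\E(m)$ of two independent walkers on $(V_n,P_n)$ whose initial positions are each drawn from $\pi_n$. The key observation is that, since each walker starts from stationarity, at every fixed time $s\ge 1$ the pair of positions has law $\pi_n\times\pi_n$, so the probability that the walkers share a state at time $s$ equals exactly $\sum_u \pi_n(u)^2 = \Delta_n$. A union bound over $s=1,\dots,t$ then gives $\P(m\le t)\le t\Delta_n$, hence $\P(m>t)\ge 1-t\Delta_n$. Summing the tail,
\[
\mbar_n = \sum_{t\ge 0}\P(m>t) \ge \sum_{t=0}^{T-1}(1-t\Delta_n) = T - \Delta_n\frac{(T-1)T}{2},
\]
and choosing $T=\lfloor \Delta_n^{-1}\rfloor$ makes the right-hand side at least $\tfrac12\Delta_n^{-1}-1$, which exceeds $\tfrac13\Delta_n^{-1}$ once $\Delta_n \le \tfrac16$, i.e. for large $n$.

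For the upper bound I would invoke Lemma \ref{Lemma:MeetingTimeBound} with $\epsilon=\tfrac12$: there is a fixed $T$ so that for every $n$ and every pair of starting states, two walkers share a state at time $W:=n+T$ with probability at least $\tfrac12\Delta_n$. Because this lower bound is uniform over the starting pair, I would iterate over disjoint windows of length $W$. Writing $S_k$ for the event that the walkers share a state at time $kW$, the Markov property gives $\P(S_k^c \mid \mathcal{F}_{(k-1)W}) \le 1-\tfrac12\Delta_n$ regardless of the positions at time $(k-1)W$, whence by induction $\P(m_n(u,v) > kW) \le \P(S_1^c\cap\cdots\cap S_k^c) \le (1-\tfrac12\Delta_n)^k$. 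Summing $\E(m_n(u,v))=\sum_t \P(m_n(u,v)>t)$ in blocks of length $W$ and using monotonicity yields $\E(m_n(u,v)) \le W\sum_{k\ge 0}(1-\tfrac12\Delta_n)^k = 2(n+T)\Delta_n^{-1}$. Taking the maximum over $u,v$ and choosing $K$ with $2(n+T)\le Kn$ for large $n$ gives $m_n^*\le Kn\Delta_n^{-1}$.

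The hard part is the iteration in the upper bound: one must verify that conditioning on not yet having met, equivalently on the arbitrary pair of positions at the start of a window, still leaves the meeting probability over the next window bounded below by $\tfrac12\Delta_n$. This is precisely where the uniformity over all starting pairs in Lemma \ref{Lemma:MeetingTimeBound} is indispensable, and where the Markov property and time-homogeneity of $(V_n,P_n)$ must be applied with care to convert the single-window estimate into the geometric tail bound.
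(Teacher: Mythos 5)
Your proposal is correct and follows essentially the same route as the paper: the same union bound over times combined with stationarity for the lower bound (which the paper phrases as translation invariance of $\mu \times \mu$ on sequence space), and the same iteration of Lemma \ref{Lemma:MeetingTimeBound} over windows of length $n+T$ via the Markov property to get the geometric tail for the upper bound. The only cosmetic difference is that you fix $\epsilon = \tfrac12$ where the paper keeps a general $\epsilon \in (0,1)$ and sets $K = \frac{1+T}{1-\epsilon}$.
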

\begin{proof}
 In the trivial case $|V|=1$, we have $\Delta_n = 1 = \mbar_n = m_n^*$, and the desired inequalities hold for all $n$. Now suppose that $|V|>1$.
 The estimate $\mbar_n \leq m_n^*$ is immediate from the definitions. Let us prove the other inequalities. Recall that $\mbar_n = \E_{\mu \times \mu}(M_n)$, where $M_n(x,y)$ is the first time $t$ such that $x_t^{t+n-1} = y_t^{t+n-1}$. Also,
\begin{align} \label{Eqn:Dodgers}
\begin{split} 
 \mu \times \mu ( M_n \leq k ) & = \mu \times \mu \Biggl( \bigcup_{t \leq k} \{ x_t^{t+n-1} = y_t^{t+n-1} \} \Biggr) \\
 & \leq \sum_{t=1}^k \mu \times \mu \biggl( x_t^{t+n-1} = y_t^{t+n-1} \biggr) \\
 & = k \mu \times \mu \biggl( x_1^n = y_1^n \biggr) \\
 & = k \Delta_n,
\end{split}
\end{align}
where we have used the translation invariance of $\mu \times \mu$. 
Let $I_n = \lfloor \frac{1}{\Delta_n} \rfloor$. Then by the tail-sum formula and (\ref{Eqn:Dodgers}), we have that
\begin{align} 
\begin{split} \label{Eqn:Dugout}
\mbar_n & =  \E_{\mu \times \mu}(M_n) \\
 & = \sum_{t >0} \mu \times \mu( M_n >t) \\
 & = \sum_{t > 0 } (1 - \mu \times \mu( M_n \leq t) ) \\
 & \geq \sum_{t =1}^{I_n} (1 - t \Delta_n) \\
 & \geq I_n - \Delta_n I_n (I_n+1)/2 \\
 & = I_n \Bigl( 1- (I_n+1)\Delta_n / 2\Bigr).
\end{split}
\end{align}
Then by the elementary inequalities 
\begin{equation*}
 \frac{1}{\Delta_n} - 1 \leq \biggl\lfloor \frac{1}{\Delta_n} \biggr\rfloor \leq \frac{1}{\Delta_n},
\end{equation*}
we see that $I_n \geq \frac{1}{\Delta_n} - 1 = \frac{1}{\Delta_n}(1-\Delta_n)$ and $(I_n+1)\Delta_n \leq 1+\Delta_n$. Using these inequalities in (\ref{Eqn:Dugout}), we obtain
\begin{equation} \label{Eqn:Bullpen}
 \mbar_n \geq I_n \Bigl( 1- (I_n + 1)\Delta_n / 2\Bigr) \geq  \frac{1}{\Delta_n}(1-\Delta_n)\Bigl( 1 - \frac{1}{2} - \Delta_n/2 \Bigr).
\end{equation}
Since $|V|>1$, we have that $\Delta_n$ tends to $0$ as $n$ tends to infinity (by Propositions \ref{Prop:DeltanLimit} and \ref{Prop:LnotZero}). Thus, for large $n$, (\ref{Eqn:Bullpen}) gives that 
\begin{equation*}
 \mbar_n \geq \frac{1}{3 \Delta_n}.
\end{equation*}

We now proceed to show that there exists $K >0$ such that for large $n$,
\begin{equation*}
  m_n^* \leq K n \frac{1}{\Delta_n}.
\end{equation*}
Fix $\epsilon \in (0,1)$. By Lemma \ref{Lemma:MeetingTimeBound}, there exists $T$ such that for any $u,v$ in $V_n$,
\begin{equation} \label{Eqn:Yankees}
 \P( m_n(u,v) \leq n+T) \geq (1-\epsilon) \Delta_n.
\end{equation}
Using (\ref{Eqn:Yankees}) and induction on $k$, one may check that
\begin{equation} \label{Eqn:Phillies}
 \P( m_n(u,v) > k(n+T) ) \leq (1-(1-\epsilon)\Delta_n)^k.
\end{equation}
Let $r_n = 1 - (1-\epsilon)\Delta_n$. Then tail-sum and (\ref{Eqn:Phillies}) give
\begin{align}
 \begin{split}
  \E( m_n(u,v) ) & = \sum_{t = 0}^{\infty} \P( m_n(u,v) > t) \\
    & \leq \sum_{t = 0}^{\infty} \P\biggl( m_n(u,v) > \biggl \lfloor \frac{t}{n+T} \biggr \rfloor (n+T) \biggr) \\
    & \leq \sum_{t = 0}^{\infty} r_n^{ \bigl\lfloor \frac{t}{n+T} \bigr\rfloor} \\
    & = \sum_{k = 0}^{\infty} (n+T) r_n^k  \\
    & =  \frac{n+T}{1-r_n} \\
    & = \frac{n+T}{(1-\epsilon) \Delta_n}.
 \end{split}
\end{align}
Thus with $K = \frac{1+T}{1-\epsilon}$, we have shown that
\begin{equation*}
 m_n^* \leq K n \frac{1}{\Delta_n},
\end{equation*}
as desired.
\end{proof}

With the above bounds in place, we are now in a position to characterize the exponential rate of growth of \textit{expected} meeting times and \textit{expected} coalescence times.

\begin{theorem} \label{Thm:Expectations}
Suppose $(V,P)$ is a mixing Markov chain. Then
\begin{align*}
 L(V,P) & = \lim_n - \frac{1}{n} \log \Delta_n \\
   & =  \lim_n \frac{1}{n} \log \E(C_n) \\
   & = \lim_n \frac{1}{n} \log \mbar_n \\
   & = \lim_n \frac{1}{n} \log m_n^*.
\end{align*}
In particular, Theorem \ref{Thm:ThirdMain} holds.
\end{theorem}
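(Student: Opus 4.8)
The plan is to chain together the two exponential estimates already in hand. The first equality, $L(V,P) = \lim_n -\frac{1}{n}\log\Delta_n$, is exactly the content of Proposition~\ref{Prop:DeltanLimit}, so nothing further is needed there; everything else will be deduced by comparing $\mbar_n$, $m_n^*$, and $\E(C_n)$ to $1/\Delta_n$ and then taking logarithms. Since all the genuine work has been done in Proposition~\ref{Prop:nBlockBounds}, Lemma~\ref{Lemma:FirstMomentBound}, and Lemma~\ref{Lemma:MeetingTimeLB}, the remaining argument is a matter of bookkeeping: I must check that the constant and polynomial prefactors that appear in those bounds ($K$, the factor $n$, the factor $1/3$) contribute nothing to the exponential rate.

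I would first treat $\mbar_n$ and $m_n^*$ by a squeeze argument applied to the chain of inequalities from Proposition~\ref{Prop:nBlockBounds},
\begin{equation*}
 \frac{1}{3}\frac{1}{\Delta_n} \leq \mbar_n \leq m_n^* \leq K n \frac{1}{\Delta_n}.
\end{equation*}
Applying $\frac{1}{n}\log(\cdot)$, the leftmost quantity equals $-\frac{1}{n}\log\Delta_n + \frac{1}{n}\log\frac{1}{3}$ and the rightmost equals $-\frac{1}{n}\log\Delta_n + \frac{1}{n}\log K + \frac{1}{n}\log n$. The additive terms $\frac{1}{n}\log\frac{1}{3}$, $\frac{1}{n}\log K$, and $\frac{1}{n}\log n$ all tend to $0$, so by Proposition~\ref{Prop:DeltanLimit} both the left and right bounds converge to $L(V,P)$. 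By the sandwich theorem, $\lim_n \frac{1}{n}\log\mbar_n$ and $\lim_n \frac{1}{n}\log m_n^*$ both exist and equal $L(V,P)$.

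Finally, for the coalescence time I would bound $\E(C_n)$ from both sides in terms of $m_n^*$. Lemma~\ref{Lemma:MeetingTimeLB} gives $m_n^* \leq \E(C_n)$, so $\liminf_n \frac{1}{n}\log\E(C_n) \geq \lim_n \frac{1}{n}\log m_n^* = L(V,P)$. Conversely, Lemma~\ref{Lemma:FirstMomentBound} gives $\E(C_n) \leq K n\, m_n^*$, so $\limsup_n \frac{1}{n}\log\E(C_n) \leq \lim_n\bigl(\frac{1}{n}\log K + \frac{1}{n}\log n + \frac{1}{n}\log m_n^*\bigr) = L(V,P)$. Combining the two estimates shows $\lim_n \frac{1}{n}\log\E(C_n) = L(V,P)$, which establishes all four equalities; since the statements for $m_n^*$ and $\mbar_n$ are precisely the assertions of Theorem~\ref{Thm:ThirdMain}, that theorem follows at once. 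There is no serious obstacle left at this stage: the only point demanding any care is the uniform verification that the subexponential factors vanish after dividing by $n$ and taking the limit, which is immediate once one notes that $\frac{1}{n}\log n \to 0$.
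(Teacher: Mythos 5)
Your proposal is correct and follows essentially the same route as the paper: the paper simply concatenates Lemma \ref{Lemma:MeetingTimeLB}, Lemma \ref{Lemma:FirstMomentBound}, and Proposition \ref{Prop:nBlockBounds} into the single chain $\frac{1}{3}\frac{1}{\Delta_n} \leq \mbar_n \leq m_n^* \leq \E(C_n) \leq K_1 n\, m_n^* \leq K_2 n^2 \frac{1}{\Delta_n}$ and then invokes Proposition \ref{Prop:DeltanLimit}, which is exactly your squeeze argument done in one step rather than two.
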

\begin{proof}
By Lemma \ref{Lemma:MeetingTimeLB}, Lemma \ref{Lemma:FirstMomentBound} and Proposition \ref{Prop:nBlockBounds}, there exist constants $K_1$ and $K_2$ such that
\begin{equation} \label{Eqn:AllBounds}
 \frac{1}{3} \frac{1}{\Delta_n} \leq \mbar_n \leq  m_n^* \leq \E(C_n) \leq K_1 n \, m_n^* \leq K_2 n^2 \frac{1}{\Delta_n}.
\end{equation}
Then by Proposition \ref{Prop:DeltanLimit} and the bounds in (\ref{Eqn:AllBounds}), we see that
\begin{align*}
 L(V,P) & = \lim_n - \frac{1}{n} \log \Delta_n \\
   & =  \lim_n \frac{1}{n} \log \E(C_n) \\
   & = \lim_n \frac{1}{n} \log \mbar_n \\
   & = \lim_n \frac{1}{n} \log m_n^*.
\end{align*}
\end{proof}

\subsection{Convergence in probability and almost surely}

We are now prepared to prove Theorems \ref{Thm:FirstMain} and \ref{Thm:FourthMain}, which characterize the exponential growth rate of coalescence and meeting times in probability and almost surely. The following proposition shows that it is highly unlikely for coalescence and meeting times to occur too soon. 

\begin{proposition} \label{Prop:TooEarly}
Suppose $(V,P)$ is a mixing Markov chain. For each $\epsilon >0$ there exists $\delta>0$ and $N_1$ such that if $n \geq N_1$, then
\begin{equation} \label{Eqn:RedSox1}
 \P\biggl( \frac{1}{n} \log C_n - L < - \epsilon \biggr) \leq \mu \times \mu \bigl( M_n  < e^{n(L-\epsilon)} \bigr) \leq e^{-\delta n}.
\end{equation}
\end{proposition}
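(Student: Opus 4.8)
The plan is to establish the two inequalities in \eqref{Eqn:RedSox1} separately, working from right to left. The leftmost inequality, $\P\bigl(\frac{1}{n}\log C_n - L < -\epsilon\bigr) \leq \mu\times\mu(M_n < e^{n(L-\epsilon)})$, should follow from Lemma \ref{Lemma:MeetingTimeLB}, which gives $m_n(u,v) \leq C_n$ pointwise. The event $\{\frac{1}{n}\log C_n - L < -\epsilon\}$ is exactly $\{C_n < e^{n(L-\epsilon)}\}$, and whenever $C_n$ is small, the meeting time of any particular pair of walkers is also small. The subtlety is matching the probability space of the coalescing walk (governed by $\P$) to the product measure $\mu\times\mu$ governing $M_n$: I would argue that, since coalescence forces all pairs to have met, and in particular forces a pair started from stationarity to have met, the coalescence event controls the stationary meeting time $M_n$. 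So first I would carefully set up this coupling/domination to pass from $C_n$ under $\P$ to $M_n$ under $\mu\times\mu$.

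The main work is the rightmost inequality, a large-deviation bound showing $\mu\times\mu(M_n < e^{n(L-\epsilon)}) \leq e^{-\delta n}$. Here I would use a union bound over the possible (early) meeting times, exactly as in \eqref{Eqn:Dodgers}: writing $k_n = \lceil e^{n(L-\epsilon)}\rceil$, translation invariance of $\mu\times\mu$ gives
\begin{equation*}
 \mu\times\mu(M_n < k_n) \leq \sum_{t=1}^{k_n} \mu\times\mu\bigl(x_t^{t+n-1} = y_t^{t+n-1}\bigr) = k_n\,\Delta_n.
\end{equation*}
Thus $\mu\times\mu(M_n < e^{n(L-\epsilon)}) \leq e^{n(L-\epsilon)}\Delta_n$ (up to the ceiling), and it remains to show this is exponentially small. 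By Proposition \ref{Prop:DeltanLimit}, $\frac{1}{n}\log\Delta_n \to -L$, so for large $n$ we have $\Delta_n \leq e^{-n(L-\epsilon/2)}$, say. Combining,
\begin{equation*}
 e^{n(L-\epsilon)}\Delta_n \leq e^{n(L-\epsilon)} e^{-n(L-\epsilon/2)} = e^{-n\epsilon/2}.
\end{equation*}
Setting $\delta = \epsilon/3$ (to absorb the ceiling correction and fix the threshold at which Proposition \ref{Prop:DeltanLimit} gives the needed bound) and choosing $N_1$ accordingly completes this half.

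The step I expect to be the main obstacle is the first inequality rather than the large-deviation estimate. The large-deviation bound is essentially just the union bound of \eqref{Eqn:Dodgers} combined with the established rate of decay of $\Delta_n$, so it is routine once Proposition \ref{Prop:DeltanLimit} is in hand. The genuine care is needed in relating the coalescing-walk probability $\P(C_n < e^{n(L-\epsilon)})$ to the independent-walks quantity $\mu\times\mu(M_n < \cdot)$: one must argue that coalescing dynamics do not meet \emph{later} than independent dynamics (before the first meeting the two descriptions agree, and coalescence of the full system dominates the meeting of a distinguished stationary pair), so that an early full coalescence under $\P$ implies an early meeting under $\mu\times\mu$. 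I would make this rigorous by coupling a stationary pair of independent walkers to two of the coalescing walkers up to their first meeting time, noting that $M_n$ (stationary meeting) is stochastically no larger than $C_n$, which yields the desired probability inequality.
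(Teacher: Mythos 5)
Your proposal is correct and follows essentially the same route as the paper: the first inequality comes from $M_n \leq C_n$ (Lemma \ref{Lemma:MeetingTimeLB}), and the second from a union bound over early meeting times together with translation invariance of $\mu \times \mu$ and the decay rate $\frac{1}{n}\log\Delta_n \to -L$ from Proposition \ref{Prop:DeltanLimit}. The coupling point you flag as the main obstacle is handled implicitly in the paper in exactly the way you describe (before their first meeting, any two coalescing walkers evolve independently, so $\P(C_n < t) \leq \sum_{u,v}\pi_n(u)\pi_n(v)\P(m_n(u,v)<t) = \mu\times\mu(M_n < t)$), so your extra care there is warranted but does not change the argument.
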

\begin{proof}
Let $\epsilon >0$. Since $M_n \leq C_n$ (Lemma \ref{Lemma:MeetingTimeLB}), we have that
\begin{equation*}
 \biggl\{  \frac{1}{n} \log C_n - L < -\epsilon \biggr\} \subset \biggl\{ M_n < e^{n(L-\epsilon)} \biggr\}.
\end{equation*}
Furthermore,
\begin{align*}
 \mu \times \mu \bigl(  M_n < e^{n(L-\epsilon)} \bigr) & = \mu \times \mu \Biggl( \bigcup_{j=1}^{e^{n(L-\epsilon)}-1} \Bigl\{ x_j^{j+n-1} = y_j^{j+n-1} \Bigr\} \Biggr) \\
 & \leq \sum_{j=1}^{e^{n(L-\epsilon)}-1} \mu \times \mu \bigl( x_j^{j+n-1} = y_j^{j+n-1} \bigr) \\
 &  \leq \sum_{j=0}^{e^{n(L-\epsilon)}-1} \mu \times \mu \bigl( (\sigma \times \sigma)^{-j}\{x_1^{n} = y_1^{n}\} \bigr) \\
 & = e^{n(L-\epsilon)} \mu \times \mu \bigl( x_1^{n} = y_1^{n} \bigr) \\
 & = e^{n(L-\epsilon + \frac{1}{n} \log \Delta_n)},
\end{align*}
where we have used that $\mu$ is $\sigma$-invariant. Since $\lim_n \frac{1}{n} \log \Delta_n = -L$ (Proposition \ref{Prop:DeltanLimit}), we have shown that there exists $\delta >0$ and $N$ such that for $n \geq N$, it holds that
\begin{equation*} 
 \P\biggl( \frac{1}{n} \log C_n - L < - \epsilon \biggr) \leq \mu \times \mu \bigl( M_n  < e^{n(L-\epsilon)} \bigr) \leq e^{-\delta n}.
\end{equation*}
\end{proof}

By the above proposition, we have bounds on the probability of meeting or coalescing too soon. The following proposition gives similar bounds on the probability of meeting or coalescing too late. The basic idea is to use the moment bounds from Section \ref{Sect:Moments} in a second moment argument.

\begin{proposition} \label{Prop:TooLate}
Suppose $(V,P)$ is a mixing Markov chain. For each $\epsilon >0$, there exists $\delta >0$ and $N_2$ such that if $n\geq N_2$, then
\begin{equation} \label{Eqn:RedSox2}
  \mu \times \mu \biggl( \frac{1}{n} \log M_n > L+\epsilon \biggr) \leq \P \biggl( \frac{1}{n} \log C_n > L + \epsilon \biggr) \leq e^{-\delta n}.
\end{equation}
\end{proposition}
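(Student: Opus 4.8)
The plan is to establish the two inequalities in (\ref{Eqn:RedSox2}) separately, mirroring the structure of Proposition \ref{Prop:TooEarly} but now controlling the upper tail. The first inequality, $\mu \times \mu(\frac{1}{n}\log M_n > L+\epsilon) \leq \P(\frac{1}{n}\log C_n > L+\epsilon)$, is the same pathwise domination used there: since the meeting time of any fixed pair never exceeds the full coalescence time (Lemma \ref{Lemma:MeetingTimeLB}), for every $u,v$ we have $\P(m_n(u,v) > t) \leq \P(C_n > t)$, and averaging against $\pi_n \times \pi_n = \mu \times \mu$ gives $\mu \times \mu(M_n > t) \leq \P(C_n > t)$ for all $t$. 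Taking $t = e^{n(L+\epsilon)}$ yields the first inequality immediately.

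The substantive content is the second inequality, equivalently $\P(C_n > e^{n(L+\epsilon)}) \leq e^{-\delta n}$. The cleanest route is a first-moment bound: by Theorem \ref{Thm:Expectations} we have $\frac{1}{n}\log \E(C_n) \to L$, so there is $N_2$ with $\E(C_n) \leq e^{n(L+\epsilon/2)}$ for $n \geq N_2$, and Markov's inequality gives
\begin{equation*}
 \P\bigl(C_n > e^{n(L+\epsilon)}\bigr) \leq e^{-n(L+\epsilon)}\,\E(C_n) \leq e^{-n(L+\epsilon)}\,e^{n(L+\epsilon/2)} = e^{-n\epsilon/2},
\end{equation*}
so the claim holds with $\delta = \epsilon/2$. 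If one wishes to avoid invoking the full strength of Theorem \ref{Thm:Expectations}, the input $\E(C_n) \leq e^{n(L+\epsilon/2)}$ can instead be read off from $\E(C_n) \leq K n^2/\Delta_n$ (Lemma \ref{Lemma:FirstMomentBound} with Proposition \ref{Prop:nBlockBounds}) together with $\frac{1}{n}\log(1/\Delta_n) \to L$ (Proposition \ref{Prop:DeltanLimit}).

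To match the second-moment approach flagged before the statement, one can instead feed the bound $\E(C_n^2) \leq K n^2 \E(C_n)^2$ of Lemma \ref{Lemma:SecondMoment} into Chebyshev's inequality. Writing $t = e^{n(L+\epsilon)}$ and using $\E(C_n) \leq e^{n(L+\epsilon/2)} \leq t/2$ for large $n$, one obtains $\P(C_n > t) \leq \P(|C_n - \E C_n| > t - \E C_n) \leq \Var(C_n)/(t-\E C_n)^2 \leq 4 K n^2 \E(C_n)^2/t^2 = O(n^2 e^{-n\epsilon})$, which is again of order $e^{-\delta n}$. Either route shows there is no genuine obstacle here: the only quantitative fact required is that $\E(C_n)$ is subexponentially close to $e^{nL}$, and the only care needed is the bookkeeping of constants ensuring $\E(C_n)$ sits comfortably below the threshold $e^{n(L+\epsilon)}$ and that the resulting $\delta$ and $N_2$ depend only on $\epsilon$ and $(V,P)$. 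I would note in passing that summing the exponential tail bound of Lemma \ref{Lemma:TailBound} over pairs, as in Lemma \ref{Lemma:FirstMomentBound}, would in fact yield the far stronger super-exponential estimate $|V|^n \exp(-c\,e^{n\epsilon}/n)$, so the advertised $e^{-\delta n}$ is not tight but is all that the downstream arguments require.
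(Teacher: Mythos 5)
Your proof is correct, and your primary route is genuinely simpler than the paper's. The paper proves the second inequality by a second-moment argument: it rewrites the event $\{\frac1n\log C_n>L+\epsilon\}$ as $\{C_n-\E(C_n)>(e^{n\epsilon/2}-1)\E(C_n)\}$ (after choosing $n$ large enough that $L>\frac1n\log\E(C_n)-\epsilon/2$, via Theorem \ref{Thm:Expectations}), applies Chebyshev, and then invokes Lemma \ref{Lemma:SecondMoment} to bound $\Var(C_n)/\E(C_n)^2$ by $Kn^2$, yielding $Kn^2(e^{n\epsilon/2}-1)^{-2}\leq e^{-\delta n}$. Your first-moment route --- Markov's inequality together with $\E(C_n)\leq e^{n(L+\epsilon/2)}$ --- reaches the same conclusion with $\delta=\epsilon/2$ and makes no use of Lemma \ref{Lemma:SecondMoment} at all; since that lemma is used nowhere else in the paper, your argument would let one delete it entirely. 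What the paper's Chebyshev route buys is only a marginally better rate ($\delta$ near $\epsilon$ rather than $\epsilon/2$), which the statement does not require. Your second sketch (Chebyshev with $t-\E C_n\geq t/2$) is essentially the paper's proof in different packaging, and your closing observation --- that summing the tail bound of Lemma \ref{Lemma:TailBound} over the $|V_n|$ pairs gives a super-exponential bound of the form $|V|^n\exp(-c\,e^{n\epsilon}/n)$ --- is also correct and is in fact the sharpest of the three routes. The handling of the first inequality (pathwise domination $m_n(u,v)\leq C_n$ from Lemma \ref{Lemma:MeetingTimeLB}, then averaging over $\pi_n\times\pi_n$) matches the paper. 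No gaps.
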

\begin{proof}
Let $\epsilon >0$. Since $M_n \leq C_n$ (Lemma \ref{Lemma:MeetingTimeLB}), we have that 
\begin{equation*}
 \mu \times \mu \biggl( \frac{1}{n} \log M_n > L+\epsilon \biggr) \leq \P \biggl( \frac{1}{n} \log C_n > L + \epsilon \biggr).
\end{equation*}
Choose $n$ large so that $L > \frac{1}{n} \log \E(C_n) - \epsilon/2$ (which is possible by Theorem \ref{Thm:Expectations}). Then
\begin{align*}
 \mu \times \mu \biggl( \frac{1}{n} \log M_n > L+\epsilon \biggr) & \leq \P \biggl( \frac{1}{n} \log C_n > L + \epsilon \biggr) \\
& \leq \P \biggl( \frac{1}{n} \log C_n > \frac{1}{n} \log \E(C_n) +  \epsilon/2 \biggr).
\end{align*}
Also,
\begin{align*}
 \P \biggl( \frac{1}{n} \log C_n > & \frac{1}{n} \log \E(C_n) +  \epsilon/2 \biggr) \\
 & = \P \biggl( C_n - \E(C_n) > (e^{n\epsilon/2} - 1)\E(C_n) \frac{\sqrt{\Var(C_n)}}{\sqrt{\Var(C_n)}} \biggr).
\end{align*}
By Chebyshev's Inequality, we have that
\begin{equation*}
 \P \biggl( C_n - \E(C_n) > (e^{n\epsilon/2} - 1)\E(C_n) \frac{\sqrt{\Var(C_n)}}{\sqrt{\Var(C_n)}} \biggr) \leq \frac{\Var(C_n)}{\E(C_n)^2} \frac{1}{(e^{n \epsilon/2}-1)^2}.
\end{equation*}
By Lemma \ref{Lemma:SecondMoment}, there exists a constant $K$ such that
\begin{equation*}
  \frac{\Var(C_n)}{\E(C_n)^2} \frac{1}{(e^{n \epsilon/2}-1)^2} \leq  \frac{Kn^2}{(e^{n \epsilon/2}-1)^2}.
\end{equation*}
Combining the above inequalities, we obtain that there exists $\delta >0$ such that for large $n$,
\begin{equation*} 
  \mu \times \mu \biggl( \frac{1}{n} \log M_n > L+\epsilon \biggr) \leq \P \biggl( \frac{1}{n} \log C_n > L + \epsilon \biggr) \leq e^{-\delta n}.
\end{equation*}
\end{proof}

Finally, we give proofs of Theorem \ref{Thm:FirstMain} and Theorem \ref{Thm:FourthMain}. With the exponential bounds from Propositions \ref{Prop:TooEarly} and \ref{Prop:TooLate} already in place, Theorem \ref{Thm:FirstMain} is immediate and the proof of Theorem \ref{Thm:FourthMain} amounts to an application of Borel-Cantelli.

\vspace{3mm}
\begin{pfofthmFirst}{}
By Equations (\ref{Eqn:RedSox1}) and (\ref{Eqn:RedSox2}), we have that $\frac{1}{n} \log C_n$ converges to $L(V,P)$ in probability with exponential rate in $n$. This completes the proof of Theorem \ref{Thm:FirstMain}.
\end{pfofthmFirst}

\vspace{3mm}

\begin{pfofthmFourth}{}
By Borel-Cantelli, we conclude from Equation (\ref{Eqn:RedSox1}) that 
\begin{equation*}
\liminf_n \frac{1}{n} \log M_n \geq L - \epsilon, \quad \mu \times \mu-a.s.
\end{equation*}
Since $\epsilon$ was arbitrary, we obtain that 
\begin{equation} \label{Eqn:Sabres}
\liminf_n \frac{1}{n} \log M_n \geq L , \quad \mu \times \mu-a.s.
\end{equation}
Similarly, Borel-Cantelli and Equation (\ref{Eqn:RedSox2}) yield that 
\begin{equation*}
 \limsup_n \frac{1}{n} \log M_n \leq L + \epsilon, \quad \mu \times \mu - a.s.
\end{equation*}
Since $\epsilon$ was arbitrary, we see that
\begin{equation*}
 \limsup_n \frac{1}{n} \log M_n \leq L, \quad \mu \times \mu - a.s.
\end{equation*}
Thus, we have shown that
\begin{equation*}
\lim_n \frac{1}{n} \log M_n = L, \quad \mu \times \mu - a.s.,
\end{equation*}
which completes the proof of Theorem \ref{Thm:FourthMain}.
\end{pfofthmFourth}

\subsection{Characterizing $L(V,P)$}

In this section we prove the three statements in Theorem \ref{Thm:SecondMain}, which characterizes the relationship between $L(V,P)$ and the entropy of $(V,P)$, which we denote by $h(V,P)$. This section relies heavily on the thermodynamic formalism for Markov chains in Section \ref{Sect:ThermFormalism}. First, we show that entropy gives an upper bound on $L(V,P)$.

\begin{proposition} \label{Prop:LeqEntropy}
Suppose $(V,P)$ is a mixing Markov chain. Then $0 \leq L(V,P) \leq h(V,P)$.
\end{proposition}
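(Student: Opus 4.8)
The plan is to treat the two inequalities separately, since they have quite different characters: the lower bound $L(V,P)\geq 0$ is elementary, while the upper bound $L(V,P)\leq h(V,P)$ is the substantive one and will be obtained by feeding a single well-chosen test measure into the variational principle of Section \ref{Sect:ThermFormalism}.

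For the lower bound, I would argue directly from the matrix $Q$. Since $P(u,v)\in[0,1]$ for all $u,v$, we have $Q(u,v)=P(u,v)^2\leq P(u,v)$, and hence every row sum of $Q$ satisfies $\sum_{v\in V} Q(u,v)\leq \sum_{v\in V}P(u,v)=1$; that is, $Q$ is substochastic. The spectral radius of a nonnegative matrix is bounded above by its maximum row sum, so the Perron eigenvalue obeys $\lambda\leq 1$, and therefore $L(V,P)=-\log\lambda\geq 0$. No thermodynamic machinery is needed here.

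For the upper bound, I would invoke Fact \ref{Fact:PandL} to rewrite $L(V,P)=-\log\lambda=-\mathcal{P}(f)$, where $f(x)=\log Q(x_1,x_2)=2\log P(x_1,x_2)$ is locally constant on $X$. The key step is to apply the variational principle
\begin{equation*}
 \mathcal{P}(f)=\sup_{\nu}\Bigl\{ h(\nu)+\int f\,d\nu\Bigr\}
\end{equation*}
with the specific choice $\nu=\mu$, the shift-invariant Markov measure of $(V,P)$. Writing $g(x)=\log P(x_1,x_2)$, so that $f=2g$, a direct computation identifies the integral:
\begin{equation*}
 \int g\,d\mu=\sum_{u,v\in V}\pi(u)\,P(u,v)\,\log P(u,v)=-h(V,P),
\end{equation*}
using formula (\ref{Eqn:Entropy}), and the Kolmogorov--Sinai entropy of the Markov measure is $h(\mu)=h(V,P)$. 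Hence the test measure $\mu$ contributes
\begin{equation*}
 h(\mu)+\int f\,d\mu=h(V,P)+2\bigl(-h(V,P)\bigr)=-h(V,P),
\end{equation*}
and the variational principle gives $\mathcal{P}(f)\geq -h(V,P)$. Since $\mathcal{P}(f)=\log\lambda=-L(V,P)$, this rearranges to $L(V,P)\leq h(V,P)$, completing the proof.

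The only real care needed — and the place I expect any friction — lies in the two bookkeeping identifications of the upper-bound argument: confirming that the measure-theoretic entropy $h(\mu)$ of the shift on $X$ coincides with the chain entropy $h(V,P)$, and that $\int g\,d\mu$ evaluates to $-h(V,P)$. Both are standard for stationary Markov measures (the integrand depends only on the first two coordinates, and $\mu$ restricted to two-blocks is $\pi(u)P(u,v)$), but they should be stated explicitly so that the cancellation $h(V,P)-2h(V,P)=-h(V,P)$ is transparent. Everything else follows formally from Fact \ref{Fact:PandL} and the variational principle already recorded in Section \ref{Sect:ThermFormalism}.
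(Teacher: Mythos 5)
Your proof is correct, and your lower bound is argued exactly as in the paper: substochasticity of $Q$ forces its Perron eigenvalue to satisfy $\lambda \leq 1$, whence $L(V,P) = -\log\lambda \geq 0$. For the upper bound, however, you take a genuinely different route. The paper stays at the level of the finite chains: it writes $\Delta_n = \E_{\pi_n}(g_n)$ with $g_n(u) = \mu(u)$, applies Jensen's inequality to get $-\frac{1}{n}\log\Delta_n \leq -\frac{1}{n}\E_{\pi_n}(\log g_n)$, and then passes to the limit using Proposition \ref{Prop:DeltanLimit} on the left-hand side and the Shannon--McMillan--Breiman theorem on the right-hand side. You instead work entirely inside the thermodynamic formalism: you identify $-L(V,P)$ with $\mathcal{P}(f)$ for $f = 2\log P(x_1,x_2)$ via Fact \ref{Fact:PandL}, and lower-bound the pressure by feeding the single test measure $\nu = \mu$ into the variational principle, using $h(\mu) = h(V,P)$ and $\int f\,d\mu = -2h(V,P)$. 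Both arguments are sound and both lean on machinery the paper has already set up; yours is arguably cleaner (no limit interchange, no appeal to SMB) and has the added virtue of making the equality case transparent: $L(V,P) = h(V,P)$ exactly when $\mu$ attains the supremum, i.e.\ when $\mu = \mu_f$, which is precisely the pivot of the paper's proof of Theorem \ref{Thm:EqEntropy}. The two bookkeeping identifications you flag (that the measure-theoretic entropy of the shift under $\mu$ equals $h(V,P)$, and that $\int g\,d\mu = -h(V,P)$ because $g$ depends only on two coordinates and $\mu$ assigns mass $\pi(u)P(u,v)$ to two-blocks) are standard and are used without comment in the paper's own proof of Theorem \ref{Thm:EqEntropy}, so there is no gap.
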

\begin{proof}
Recall that $Q(u,v) = P(u,v)^2$, $\lambda$ is the Perron eigenvalue of $Q$, and $L(V,P) = - \log \lambda$. Since $P$ is stochastic, we have that $Q$ is sub-stochastic. Therefore $\lambda$, defined as the Perron eigenvalue of $Q$, satisfies $0 \leq \lambda \leq 1$. Hence $L(V,P) = - \log \lambda \geq 0$. 

We now proceed to show that $L(V,P) \leq h(V,P)$.  Define the sequence of functions $g_n : V_n \to \R$ such that $g_n(u) = \pi_n(u) = \mu(u)$. Then
\begin{equation*}
\Delta_n = \sum_{u \in V_n} \mu(u)^2 = \E_{\pi_n}(g_n).
\end{equation*}
By Jensen's inequality, we have
\begin{equation*}
-\frac{1}{n} \log(\Delta_n)  = -\frac{1}{n} \log \E_{\pi_n}(g_n) \leq -\frac{1}{n}\E_{\pi_n} (\log g_n).
\end{equation*}
Then by Proposition \ref{Prop:DeltanLimit} and the Shannon-McMillan-Breiman Theorem, we obtain
\begin{equation*}
L(V,P) = \lim_n -\frac{1}{n} \log(\Delta_n) \leq \lim_n-\frac{1}{n}\E_{\pi_n} (\log g_n) = h(V,P).
\end{equation*}
\end{proof}

By the previous proposition, we know that $L(V,P) \geq 0$. The following proposition characterizes exactly when $L(V,P) = 0$.

\begin{proposition} \label{Prop:LnotZero}
 The chain $(V,P)$ is non-trivial (\textit{i.e.} $|V|>1$) if and only if $L(V,P) > 0$.
\end{proposition}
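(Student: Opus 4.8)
The plan is to work directly with the Perron eigenvalue $\lambda$ of $Q$, since $L(V,P) = -\log\lambda$ and, by the sub-stochasticity of $Q$ already recorded in the proof of Proposition \ref{Prop:LeqEntropy}, we have $\lambda \in [0,1]$. Thus the assertion $L(V,P) > 0$ is exactly the assertion $\lambda < 1$, and the proposition becomes: $|V| > 1$ if and only if $\lambda < 1$.

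The easy direction is the contrapositive of ``non-trivial.'' If $|V| = 1$, then $P$ is the $1 \times 1$ stochastic matrix $[1]$, so $Q = [1]$, $\lambda = 1$, and $L(V,P) = 0$.

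For the main direction I assume $|V| > 1$ and show $\lambda < 1$. The two structural inputs are: (i) $Q(u,v) > 0$ exactly when $P(u,v) > 0$, so $Q$ inherits irreducibility from the mixing chain $(V,P)$, and Perron--Frobenius supplies a strictly positive right eigenvector $x$ with $Qx = \lambda x$; and (ii) for each row $u$, $\sum_v Q(u,v) = \sum_v P(u,v)^2 \le \sum_v P(u,v) = 1$, with equality iff every $P(u,v)$ lies in $\{0,1\}$, i.e. iff row $u$ of $P$ is deterministic. I would argue by contradiction: assume $\lambda = 1$, set $M = \max_u x_u$ and $S = \{u : x_u = M\}$. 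Evaluating $x_u = \sum_v Q(u,v) x_v$ at a maximizer $u \in S$ and using $x_v \le M$ forces both $\sum_v Q(u,v) = 1$ (row $u$ is deterministic) and $x_v = M$ for every $v$ with $Q(u,v) > 0$ (so $S$ is closed under taking one-step successors). Irreducibility then propagates $S$ to all of $V$, whence every row of $P$ is deterministic; but then $P$ is a permutation matrix, no power $P^k$ is strictly positive, and $(V,P)$ is not mixing, a contradiction. Hence $\lambda < 1$ and $L(V,P) > 0$.

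I expect the only genuine subtlety to be the propagation step. A single deterministic row does not by itself force $\lambda = 1$, and conversely one strictly sub-stochastic row does not instantly give $\lambda < 1$ for a general nonnegative matrix; it is the combination of the equality analysis at the maximizers of the positive eigenvector with irreducibility that does the work. Everything else — the equality case of $t^2 \le t$ on $[0,1]$, and the fact that a deterministic irreducible stochastic matrix on more than one state is a nontrivial permutation and hence periodic — is routine.
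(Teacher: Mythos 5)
Your proof is correct, but it takes a genuinely different route from the paper's. You argue directly on the matrix $Q$: using sub-stochasticity of its rows (with equality in $\sum_v P(u,v)^2 \le \sum_v P(u,v)$ only for deterministic rows), the Perron--Frobenius positive eigenvector, and the standard equality analysis at the maximizers of that eigenvector, you propagate determinism over all of $V$ by irreducibility and contradict aperiodicity. This is self-contained modulo Perron--Frobenius, and it establishes $\lambda<1$ straight from the definition $L(V,P)=-\log\lambda$, without ever passing through $\Delta_n$. The paper instead works entirely on the measure side: it cites the known fact (attributed to \cite{Abadi}) that a non-trivial mixing chain satisfies $\mu(u)\le e^{-\delta n}$ for all $u\in V_n$ and large $n$, bounds $\Delta_n \le \max_{u\in V_n}\mu(u) \le e^{-\delta n}$, and then invokes Proposition \ref{Prop:DeltanLimit} to convert this into $L(V,P)\ge\delta>0$. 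What your approach buys is independence from both the external citation and from Proposition \ref{Prop:DeltanLimit} (and hence from the thermodynamic formalism behind it); what the paper's approach buys is brevity given machinery it has already built, and a quantitative lower bound $L\ge\delta$ tied to the decay rate of cylinder measures. Your identification of the genuine subtlety is also accurate: a single deterministic or strictly sub-stochastic row decides nothing by itself, and it is the combination of the eigenvector equality case with irreducibility that closes the argument.
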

\begin{proof}
If $|V| = 1$, then $Q = P = (1)$, and therefore $L(V,P) = - \log 1 = 0$.

Now suppose that $(V,P)$ is non-trivial. It is well-known (see \cite{Abadi}, for example) that if $(V,P)$ is a non-trivial, mixing Markov chain, then there exists $\delta>0$ such that for large $n$, if $u \in V_n$, then $\mu(u) \leq e^{-\delta n}$. Thus, for large enough $n$, 
\begin{equation*}
\Delta_n = \sum_{u \in V_n} \mu(u)^2 \leq \max_{u \in V_n} \mu(u) \sum_{u \in V_n} \mu(u) = \max_{u \in V_n} \mu(u)  \leq e^{-\delta n},
\end{equation*}
and therefore
\begin{equation} \label{Eqn:Home}
 -\frac{1}{n} \log \Delta_n \geq \delta.
\end{equation}
By Proposition \ref{Prop:DeltanLimit} and (\ref{Eqn:Home}), we have 
\begin{equation*}
L(V,P) = \lim_n - \frac{1}{n} \log \Delta_n \geq \delta >0.
\end{equation*}
\end{proof}

By Proposition \ref{Prop:LeqEntropy}, we know that $L(V,P) \leq h(V,P)$. Now we characterize exactly when we have equality of these two quantities.

\begin{theorem} \label{Thm:EqEntropy}
 Suppose $(V,P)$ is a mixing Markov chain. Then $L(V,P) = h(V,P)$ if and only if $(V,P)$ is a measure of maximal entropy.
\end{theorem}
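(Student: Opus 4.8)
The plan is to translate the equality $L(V,P) = h(V,P)$ into a statement about equilibrium states and then apply the Parry--Tuncel theorem (Theorem \ref{Thm:ParryTuncel}) twice. Write $g(x) = \log P(x_1,x_2)$, so that $\mu = \mu_g$, and observe that the function $f$ from Fact \ref{Fact:PandL} satisfies $f(x) = \log Q(x_1,x_2) = 2\log P(x_1,x_2) = 2g(x)$. By Fact \ref{Fact:PandL} we therefore have $\mathcal{P}(2g) = \mathcal{P}(f) = -L(V,P)$.

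First I would compute the relevant integrals against $\mu$. Since $\mu(x_1 = u,\, x_2 = v) = \pi(u)P(u,v)$, the entropy formula (\ref{Eqn:Entropy}) gives $\int g\, d\mu = \sum_{u,v} \pi(u)P(u,v)\log P(u,v) = -h(V,P)$, while the Kolmogorov--Sinai theorem for Markov measures gives $h(\mu) = h(V,P)$. Applying the variational principle to the test measure $\mu$ then yields
\begin{equation*}
 -L(V,P) = \mathcal{P}(2g) \geq h(\mu) + \int 2g\, d\mu = h(V,P) - 2h(V,P) = -h(V,P),
\end{equation*}
which recovers the inequality $L(V,P) \leq h(V,P)$. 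Because $2g$ is locally constant, its equilibrium state $\mu_{2g}$ is the unique measure attaining the supremum, so the displayed inequality is an equality precisely when $\mu = \mu_{2g}$. Since $\mu = \mu_g$, I conclude that $L(V,P) = h(V,P)$ if and only if $\mu_g = \mu_{2g}$.

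Next I would feed this into Theorem \ref{Thm:ParryTuncel} twice. Applied to the pair $(2g, g)$, it shows that $\mu_g = \mu_{2g}$ holds if and only if $2g = g + k - k\circ\sigma + c$ for some continuous $k$ and constant $c$; equivalently, $g = k - k\circ\sigma + c$, i.e.\ $g$ is cohomologous to a constant. Applied a second time to the pair $(g, 0)$, it shows that this \emph{same} cohomology relation holds if and only if $\mu_g = \mu_0$, where $\mu_0$ is the measure of maximal entropy on $X$. The key observation driving the whole argument is that the two conditions $\mu_g = \mu_{2g}$ and $\mu_g = \mu_0$ collapse to the identical statement that $g$ is cohomologous to a constant; chaining the equivalences gives $L(V,P) = h(V,P)$ if and only if $\mu = \mu_0$.

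It remains to identify the condition $\mu = \mu_0$ with the statement that $(V,P)$ is a measure of maximal entropy in the sense of the introduction. Here I would invoke the classical fact that $\mu_0$ is itself a fully supported Markov measure (the Parry measure) on $X$, so the transition matrix it induces is an admissible competitor in the definition. By uniqueness of the measure of maximal entropy, $\mu$ maximizes entropy among all Markov measures with support contained in that of $P$ exactly when $\mu = \mu_0$. I expect the only delicate point to be the bookkeeping in the two applications of Parry--Tuncel and this final identification; there is no hard estimate, as the quantitative content is entirely supplied by Fact \ref{Fact:PandL} and the variational principle.
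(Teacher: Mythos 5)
Your argument is correct, and the forward implication is essentially the paper's own proof: both use Fact \ref{Fact:PandL} to identify $-L(V,P)$ with $\mathcal{P}(f)$ for $f = 2g$, both invoke uniqueness of equilibrium states to deduce $\mu_g = \mu_{2g}$ from the equality $L(V,P)=h(V,P)$, and both then apply Theorem \ref{Thm:ParryTuncel} twice, exploiting the fact that the cohomology conditions for $\mu_g=\mu_{2g}$ and for $\mu_g=\mu_0$ coincide. Where you genuinely diverge is the converse: you observe that every step in your chain is a biconditional (the variational inequality is an equality precisely when $\mu$ is the equilibrium state for $2g$, and Parry--Tuncel is stated as an equivalence), so the reverse implication comes for free. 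The paper instead proves the converse by a separate, more hands-on argument: assuming $\mu=\mu_0$, it uses the Gibbs bounds $K^{-1}e^{-h(\mu)n} \leq \mu(u) \leq K e^{-h(\mu)n}$ to sandwich $\Delta_n$ and then invokes Proposition \ref{Prop:DeltanLimit}. Your route is more economical and avoids the Gibbs property entirely; the paper's converse is more elementary in that it needs no second appeal to the thermodynamic machinery. One small point worth making explicit in your write-up is the final identification of ``$\mu=\mu_0$'' with the introduction's definition of $(V,P)$ being a measure of maximal entropy (maximality among Markov competitors $P'$ supported inside the support of $P$); you flag this correctly via the Parry measure being an admissible competitor, and in fact you are slightly more careful about it than the paper, which passes over this identification silently.
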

\begin{proof}
In this proof, we will have occasion to use the thermodynamic formalism for Markov chains presented in Section \ref{Sect:ThermFormalism}. Recall that the Markov measure $\mu$ corresponding to $(V,P)$ is the unique equilibrium state corresponding to the function $g : X \to \R$, where $g(x) = \log P(x_1,x_2)$, and we write $\mu = \mu_g$. Let $f : X \to \R$ be the function $f(x) = \log Q(x_1,x_2) = 2 \log P(x_1,x_2) = 2 g(x)$. Assume that $L(V,P) = h(V,P) = h(\mu)$. Then $\mathcal{P}(f) = \log \lambda = - L(V,P)$ (see Section \ref{Sect:ThermFormalism}), and using (\ref{Eqn:Entropy}), we have
\begin{align*}
\mathcal{P}(f) & = -L(V,P) \\
 & = -h(\mu) \\
 & = h(\mu) +2 \sum_{u,v} \pi(u) P(u,v) \log P(u,v) \\
 & = h(\mu) +\int f d\mu.
\end{align*}
By the uniqueness of the equilibrium state for $f$, we have that $\mu = \mu_f$. Hence we've shown that $\mu_g = \mu = \mu_f$. By a result of Parry and Tuncel (see Theorem \ref{Thm:ParryTuncel}), it follows that there exists a continuous function $k : X \to \R$ and a constant $c$ such that
\begin{equation*}
f = g +k - k\circ \sigma + c.
\end{equation*}
By rearranging this equation and using $g = f-g$, we see that
\begin{equation*}
g = f-g = k-k\circ\sigma+c.
\end{equation*}
Thus, by Theorem \ref{Thm:ParryTuncel}, we obtain that $\mu_g = \mu_0$. Recall that $\mu_0$ is defined as the measure of maximal entropy for $X$, and therefore  we have shown that $\mu$ is the measure of maximal entropy on $X$.

Let us now prove the reverse implication. Assume $\mu$ is the measure of maximal entropy on $X$. It is well-known (see \cite[Theorem 1.2]{Bowen}, for example) that since $\mu$ is the measure of maximal entropy, there exists a constant $K>0$ such that for each $u$ in $V_n$, 
\begin{equation*}
K^{-1} e^{-h(\mu) n} \leq \mu(u) \leq K e^{-h(\mu) n}.
\end{equation*}
Then
\begin{equation*}
K^{-1} e^{-h(\mu)n} \leq \min_{u \in V_n} \mu(u) \leq \Delta_n \leq \max_{u \in V_n} \mu(u) \leq K e^{-h(\mu)n}.
\end{equation*}
Therefore, using Proposition \ref{Prop:DeltanLimit}, we have that
\begin{equation*}
L(V,P) = \lim_n -\frac{1}{n}\log \Delta_n = h(\mu),
\end{equation*}
as desired.
\end{proof}

\section*{Acknowledgements}
KL gratefully acknowledges the support of the Duke PRUV program. KM was supported by National Science Foundation grant number 10-45153. The authors would like to thank Rick Durrett and Mike Boyle for productive conversations regarding this work, as well as the anonymous referee for helpful suggestions. 

\bibliographystyle{spmpsci}      
\bibliography{CoalescenceRefs}   

\end{document}